\author{
  \textbf{Bastian Seifert}\thanks{\texttt{Bastian.Seifert@hs-ansbach.de}}\\[0.3cm]  
    Julius Maximilian University of Würzburg \\
    Department of Mathematics \\
    97074 Würzburg \\
    Germany\\[0.2cm]
    and \\[0.2cm]
    Ansbach University of Applied Sciences \\
    Faculty of Engineering Sciences \\
    91522 Ansbach\\
    Germany \\[1cm]
}
\DeclarePairedDelimiter{\gen} {\langle}{\rangle}
\newcommand{\Lmod}[1][R]{#1\text{-}\mathsf{Mod}}
\newcommand{\cargument}{\,-\,}
\newcommand{\Pic}{\mathsf{Pic}}
\newcommand{\cotensor}[1][{}]{\mathbin{\square_{\scriptscriptstyle{#1}}}}
\newcommand{\Coend}{\mathsf{CoEnd}}
\newcommand{\Cohom}{\mathsf{CoHom}}
\newcommand{\Rcomod}[1][C]{\mathsf{CoMod}\text{-}#1}
\newcommand{\coacts}{\delta}
\newcommand{\Coacts}{\circlearrowright}
\newcommand{\co}{\mathrm{co}}
\newcommand{\counit}{\varepsilon}
\renewcommand{\conv}{\star}
\newcommand{\injco}[1]{\widehat{#1}}
\renewcommand{\conv}{\star}
\title{Equivariant Morita-Takeuchi Theory}
\begin{document}

%
%

\maketitle

%
%

\begin{abstract}
    We introduce the notion of $H$-equivariant Morita-Takeuchi theory
    for coalgebras with symmetries given by a Hopf algebra $H$. A
    cohomology theory is introduced which classifies the possible
    lifts of coactions on coalgebras to corresponding comodules. An
    equivariant Picard groupoid is defined and its connection to the
    developed cohomology theory investigated.
\end{abstract}

%
%

\tableofcontents
\newpage

%
%

\section{Introduction}
\label{sec:Introduction}%

In physical theories one is interested in the classification of
observable algebras only up to measuring equivalence. Measuring
equivalence in a strict sense is implemented by various types of
Morita equivalence, like the classical theory of
Morita~\cite{morita:1958a} where one realizes the measuring
equivalence as equivalence of module categories, the more realistic
approach of Rieffel~\cite{rieffel:1974b} for $C^*$-algebras, or the
algebraic essence of Rieffel's theory for $^*$-algebras by Bursztyn
and Waldmann~\cite{bursztyn.waldmann:2001a,bursztyn.waldmann:2001b}.

One is interested in taking care of symmetries, aswell. Probably one
of the best ways to construct symmetries is by Hopf algebra actions.
They incorporate symmetries modeled by groups and Lie algebras and
additionally are the natural habitat of quantum groups. One then needs
to lift the action of the Hopf algebra on the observable algebra to
the module categories used in the various Morita theories. For the
algebraic $^*$-Morita theory this was done in
\cite{jansen.waldmann:2006a} by Jansen and Waldmann in the setting of
equivariant Morita theory.

Another interesting thing to study are coalgebras, the dual of
algebras in the sense of inverting arrows. In the case of coalgebras
one is interested in co-measuring equivalence in the sense of
equivalence of comodule categories, aswell. The first approaches to
study this Co-Morita theory were developed by Lin~\cite{Lin:1974} and
Takeuchi~\cite{Takeuchi:1977}. The theory of Takeuchi turned out to be
more potent than the the theory of Lin. For coalgebras over rings the
most reliable theory was developed by
Al-Takhman~\cite{AlTakhman:2002a,AlTakhman:2002b}, which is the
approach on which we rely on in this article.

To implement symmetries in the setting of coalgebras it is plausible
to rely on coactions of Hopf algebras instead of actions. In
Section~\ref{sec:CoactionsHopfAlgebras} we develop the theory of Hopf
algebra coactions in the specific way we need for equivariant
Morita-Takeuchi theory. We define a cohomology theory based on the
introduced schism complex which allows for the classification of
possible lifts of coactions on coalgebras to coactions on
corresponding comodules. Special cases of the appearing cohomology
groups are the cocharacter group and the coinvariants. A notion of
$H$-equivariant coderivations is investigated, which injects into the
cocharacters by a convolution exponential. The developed cohomology
theory is somewhat dual to the Sweedler cohomology. Hence it can be
interpreted as the cohomology theory exponential to an analogon of the
Hochschild cohmology in the coalgebra setting.

In Section~\ref{sec:EquivariantMoritaTakeuchi} we proceed by equipping
Morita-Takeuchi theory with coactions in an equivariant setting. We
show that the cotensor product is compatible with $H$-coactions. We
classify all possible Hopf algebra coactions, which give rise to an
equivariant Morita-Takeuchi bicomodule, by the first cohomology group
of the schism complex.

All appearing coalgebras, Hopf algebras etc. are over the unital ring $R$,
which is assumed to be commutative, of characteristic zero, a
principal ideal domain, local and noetherian.

%
%

%
%



%
%

\section{Coactions of  Hopf algebras and their cohomology}
\label{sec:CoactionsHopfAlgebras}%

We are interested in symmetries of rather different types, e.g. Lie
algebra actions or group actions. Hence we would like to consider
symmetries of high generality. To achieve this, we implement the
notion of symmetry as \emph{Hopf algebras}.

Recall that a Hopf algebra is an algebra which has a compatible
coalgebra structure, where the unit has an inverse with respect to the
convolution product, the antipode. We will denote the algebra
multiplication by $\mu$, the unit by $\id$, the comultiplication by
$\Delta$, the counit by $\counit$ and the antipode by $S$. We will use
Sweedler notation
\begin{equation}
    \label{eq:SweedlerNotation}
    \Delta(h) = h_{(1)} \tensor h_{(2)}
\end{equation}
to denote comultiplication.

In this section we investigate how Hopf algebras can be interpreted as
symmetries. The classical way of thinking about symmetries is that of
an action. By an action we take a point of e.g.\ an algebra and move
it under the influence of an element of the Hopf algebra. But we could
consider the dual situation, i.e. that of a coaction of a Hopf
algebra, aswell. Here one is interested in how a point of a coalgebra
can be reached by elements of a Hopf algebra or more figurative, how
an element of a coalgebra decays.

Unlike in many parts of the literature, where one considers only
module or comodule structures of a Hopf algebra as action or coaction,
respectively, we want to keep track of the additional structures
appearing in a Hopf algebra. That is instead of only a module we would
consider additionally some flatness condition on it, which ensures
compatibility with the comultiplication. And instead of only a
comodule we consider some coflatness condition, which ensures
compatibility with the product of the Hopf algebra. The terms flat and
coflat are not to be considered in the categorical sense, but instead
should be thought of as coming from differential geometry, where one
considers \emph{flat connections}, i.e. those giving rise to flat
bundles. Thanks to the Serre-Swan theorem, flat bundles correspond to
flat modules. Hence the flatness conditions ensures that we can lift
the action to the Morita modules. Analogously the coflatness
conditions ensures that we have coflat modules, i.e.\ we can lift the
coaction to Morita-Takeuchi comodules.

The structure of this section is as follows. First we define the
notion of a Hopf coaction, give some examples and clarify the notion
of coinvariants. Then we will study the corresponding
\emph{cocharacter groups}, which are the analogue to character groups
of actions. We show how one can associate a Lie algebra of
coderivations to the cocharacter groups. Finally we include the
cocharacter groups in the bigger context of a cohomology theory for
Hopf algebras.

\subsection{Coactions of Hopf algebras}
\label{subsec:HopfCoaction}%

In this subsection we investigate the notion dual to that of an
action, i.e.\ the notion of coactions. These coactions describe how an
element of a coalgebra decays. We first state the definition, which at
first glance might look a bit messy, but afterwards the definition is
clarified by an extended version of Sweedler notation. The axioms are
just the categorical duals of that of an action. We use the flip
morphism $\tau$ with permutation in the superscript, indicating the
elements which are switched.
\begin{definition}[Hopf coaction]
    \label{definition:HopfCoaction}%
    Let $C$ be a coalgebra and $H$ a Hopf algebra. A (right) coaction of $H$
    on $C$ is a linear map $\coacts \colon C \to C \tensor H$ which
    satisfies the following axioms:
    \begin{definitionlist}
        \item \label{item:CoactionsIsComodule}
        $C$ is a $H$-comodule via $\coacts$, i.e.\ we have
        \begin{equation}
            \label{eq:CoactionCompatibleWithComultiplication}
            (\coacts \tensor \id) \circ \coacts
            = (\id \tensor \Delta_H) \circ \coacts
        \end{equation}
        and
        \begin{equation}
            \label{eq:CoactionCounitary}
            c \tensor 1 = (\id \tensor \counit)(\coacts c)
        \end{equation}
        for all $c \in C$.
        \item \label{item:CoactionIsCoflat}
        The coaction $\coacts$ is coflat, i.e. we have
        \begin{equation}
            \label{eq:CoactionCoflatnessCompatibleProduct}
            (\id \tensor \mu) \circ \tau^{(2 3)} \circ (\coacts \tensor
            \coacts) \circ \Delta_C
            = (\Delta_C \tensor \id) \circ \coacts
        \end{equation}
        and
        \begin{equation}
            \label{eq:CoactionCoflatnessCounitarity}
            (\counit_C \tensor \counit_H)(\coacts(c))
            = \counit_C(c) \tensor 1
        \end{equation}
        for $c \in C$.
    \end{definitionlist}
\end{definition}
The definition of a left coaction is completely analogously.
\begin{remark}
    \label{remark:ExtendingSweedlerNotationToCoaction}%
    The equations stated in Definition~\ref{definition:HopfCoaction}
    are very abstract. Hence we would like to concretize them using
    Sweedler notation. However we are dealing with different types of
    comultiplications and corepresentations and later we would like to
    have a good notation for coactions on comodules, aswell. A way to
    solve this issue is the following. We extend the sumless Sweedler
    notation as follows: for $c \in C$ and a coaction $\coacts$ one
    denotes the elements coming from the coaction by \emph{upper}
    indices, i.e. one writes
    \begin{equation}
        \label{eq:ExtendedSweedlerNotationForCoaction}
        \coacts c = c^{(0)} \tensor c^{(1)},
    \end{equation}
    the zero being reserved for elements of the coalgebra.
    Then the axioms of Definition~\ref{definition:HopfCoaction} can be
    stated as follows. For $c \in C$ we have for part
    \refitem{item:CoactionsIsComodule}
    \begin{equation}
        \label{eq:SweedlerCoactionCompatibleWithComultiplication}
        c^{(0)} \tensor c^{(1)} \tensor c^{(2)}
        = c^{(0)} \tensor (c^{(1)})_{(1)} \tensor (c^{(1)})_{(2)},
    \end{equation}
    which allows us to always write the comultiplication of $H$ in the upper
    indices, and
    \begin{equation}
        \label{eq:SweedlerCoactionCounitary}
        c \tensor 1 = c^{(0)} \tensor \counit(c^{(1)}),
    \end{equation}
    and for the second part one has
    \begin{equation}
        \label{eq:SweedlerCoactionCoflatnessCompatibleProduct}
        c_{(1)}^{(0)} \tensor c_{(2)}^{(0)} \tensor c_{(1)}^{(1)}
        c_{(2)}^{(1)}
        = c_{(1)}^{(0)} \tensor c_{(2)}^{(0)} \tensor c^{(1)}
    \end{equation}
    and
    \begin{equation}
        \label{eq:SweedlerCoactionCoflatnessCounitarity}
        \counit(c^{(0)}) \tensor \counit(c^{(1)})
        = \counit(c) \tensor 1.
    \end{equation}
    The axioms ensure the notation is valid, as the ambiguity of which
    comultiplication is used in
    \eqref{eq:SweedlerCoactionCoflatnessCompatibleProduct} disappears
    thanks to
    \eqref{eq:SweedlerCoactionCompatibleWithComultiplication}.
\end{remark}
We give some examples.
\begin{example}
    \label{example:Coactions}%
    \begin{examplelist}
        \item \label{item:TrivialCoaction}
        The trivial coaction is given by simply tensorizing with $1$,
        i.e. if $C$ is a coalgebra and $H$ any Hopf algebra then the
        trivial coaction of $H$ on $C$ is given by
        \begin{equation}
            \label{eq:TrivialCoaction}
            \coacts(c) = c \tensor 1
        \end{equation}
        for all $c \in C$.
        \item \label{item:GradingCoaction}
        Let $H = RG$ be a group Hopf algebra and let $C =
        \bigoplus_{g \in G} C_g$ be a $G$-graded coalgebra. Then the
        grading coaction of $H$ on $C$ is given by
        \begin{equation}
            \label{eq:GradingCoaction}
            \coacts c = c \tensor g
        \end{equation}
        for $c \in C_g$.
        \item \label{item:AdjointCoaction} Let $H$ be a Hopf algebra.
        Considering the underlying coalgebra we can investigate the
        adjoint coaction of $H$ on itself. The adjoint coaction is
        given by
        \begin{equation}
            \label{eq:AdjointCoaction}
            \coacts h = h_{(2)} \tensor S(h_{(1)}) h_{(3)}
        \end{equation}
        for all $h \in H$. For $H$ being a group Hopf algebra this is
        again the trivial coaction, since then
        $\coacts(h) = h \tensor h^{-1} h = h \tensor 1$. This holds
        for every cocommutative Hopf algebra aswell, since one has
        $\coacts(h) = h_{(2)} \tensor S(h_{(1)}) h_{(3)} = h_{(1)}
        \tensor \counit(h_{(2)}) = h \tensor 1$. For Sweedler's Hopf
        algebra
        $H_4 = R\gen{1,g,x,gx \; | \; g^2 = 1, x^2 = 0, xg = - gx}$ one
        gets $\coacts g = g \tensor 1$,
        $\coacts x = 1 \tensor xg + x \tensor g + g \tensor gx$, and
        $\coacts xg = g \tensor gx + gx \tensor g + 1 \tensor xg$.
        \item \label{item:InnerCoaction}
        One can consider an inner coaction, i.e. if we have an Hopf
        algebra $H$, a coalgebra $C$ and a coalgebra homomorphism $J
        \colon C \to H$ then the inner coaction is given by
        \begin{equation}
            \label{eq:InnerCoaction}
            \coacts c = c_{(2)} \tensor S(J(c_{1})) J(c_{(3)})
        \end{equation}
        for all $c \in C$.
    \end{examplelist}
\end{example}
As always we need a corresponding notions of morphisms, so we define
equivariant coalgebra morphisms:
\begin{definition}[$H$-equivariant coalgebra homomorphism]
    \label{definition:CoactionEquivariantMorphisms}%
    Let $H$ be a Hopf algebra coacting on the coalgebras $C$ and $D$.
    A coalgebra homomorphism $\Phi \colon C \to D$ is called
    $H$-equivariant if
    \begin{equation}
        \label{eq:CoactionEquivariantMorphisms}
        \phi(c^{(0)}) \tensor c^{(1)} = \phi(c)^{(0)} \tensor
        \phi(c)^{(1)}
    \end{equation}
    for all $c \in C$.
\end{definition}
Analogously to the invariants of an action one defines the
coinvariants of a coaction as those elements which are affected by the
coaction like under the trivial coaction.
\begin{definition}[Coaction coinvariants]
    \label{definition:CoinvariantsOfCoaction}%
    Let $C$ be a coalgebra with coaction $\coacts$ of $H$. The set of
    coinvariants is denoted by
    \begin{equation}
        \label{eq:CoinvariantsOfCoaction}
        C^{\co H} = \{c \in C \; | \; \coacts(c) = c \tensor 1 \}.
    \end{equation}
\end{definition}
From the definition we can deduce a formula for the coaction
interacting with the antipode of the Hopf algebra.
\begin{lemma}
    \label{lemma:AntipodeAndCoaction}%
    Let $C$ be a coalgebra carrying a coaction of a Hopf algebra $H$.
    Then
    \begin{equation}
        \label{eq:AntipodeAndCoaction}
        c_{(1)} \tensor c_{(2)}^{(0)} \tensor c_{(2)}^{(1)}
        = c_{(1)}^{(0)} \tensor c_{(2)}^{(0)} \tensor S(c_{(1)}^{(1)})
          c^{(1)}
    \end{equation}
    and
    \begin{equation}
        \label{eq:AntipodeAndCoactionSecond}
        c_{(1)}^{(0)} \tensor c_{(2)} \tensor c_{(1)}^{(1)}
        = c_{(1)}^{(0)} \tensor c_{(2)}^{(0)} \tensor c^{(1)}
          S(c_{(2)}^{(1)})
    \end{equation}
    hold for all $c \in C$.
\end{lemma}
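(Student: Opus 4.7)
I shall prove \eqref{eq:AntipodeAndCoaction}; the argument for \eqref{eq:AntipodeAndCoactionSecond} is entirely analogous via the mirror antipode identity. The strategy is the standard Hopf algebra move of collapsing a product $S(h_{(1)}) h_{(2)} = \counit(h) 1_H$ and then absorbing the resulting counit via the counitarity of the coaction \eqref{eq:SweedlerCoactionCounitary}. Coflatness \eqref{eq:SweedlerCoactionCoflatnessCompatibleProduct} and the comodule axiom \eqref{eq:SweedlerCoactionCompatibleWithComultiplication} serve to switch between the single- and iterated-coaction interpretations of the upper Sweedler indices.

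Starting from the left-hand side $c_{(1)} \tensor c_{(2)}^{(0)} \tensor c_{(2)}^{(1)}$, I would first apply counitarity in the form $c_{(1)} = c_{(1)}^{(0)} \counit(c_{(1)}^{(1)})$ and move the scalar $\counit(c_{(1)}^{(1)})$ into the third tensor slot, producing $c_{(1)}^{(0)} \tensor c_{(2)}^{(0)} \tensor \counit(c_{(1)}^{(1)}) c_{(2)}^{(1)}$. Next I would substitute $\counit(c_{(1)}^{(1)}) \cdot 1_H = S((c_{(1)}^{(1)})_{(1)})(c_{(1)}^{(1)})_{(2)}$, which under the comodule axiom \eqref{eq:SweedlerCoactionCompatibleWithComultiplication} applied to $c_{(1)}$ becomes $S(c_{(1)}^{(1)}) c_{(1)}^{(2)}$ in iterated Sweedler notation. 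This yields $c_{(1)}^{(0)} \tensor c_{(2)}^{(0)} \tensor S(c_{(1)}^{(1)}) c_{(1)}^{(2)} c_{(2)}^{(1)}$. Finally, coflatness \eqref{eq:SweedlerCoactionCoflatnessCompatibleProduct} re-reads the product $c_{(1)}^{(2)} c_{(2)}^{(1)}$---appearing in a triple tensor whose first two slots are $c_{(1)}^{(0)} \tensor c_{(2)}^{(0)}$---as the single-coaction factor $c^{(1)}$, delivering the right-hand side.

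The analogous argument for \eqref{eq:AntipodeAndCoactionSecond} begins with the left-hand side $c_{(1)}^{(0)} \tensor c_{(2)} \tensor c_{(1)}^{(1)}$, uses counitarity to rewrite $c_{(2)} = c_{(2)}^{(0)} \counit(c_{(2)}^{(1)})$, and then invokes the mirror antipode identity $h_{(1)} S(h_{(2)}) = \counit(h) 1_H$ together with the iterated coaction on $c_{(2)}$ and coflatness to reach the right-hand side.

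The main obstacle is not computational but notational: at each step one must be aware of which interpretation of a Sweedler index $c_{(i)}^{(j)}$ is active---the single coaction on $c_{(i)}$, the iterated coaction on $c_{(i)}$, or $\Delta_H$ applied to a single coaction value---and the identities \eqref{eq:SweedlerCoactionCompatibleWithComultiplication} and \eqref{eq:SweedlerCoactionCoflatnessCompatibleProduct} are precisely the tools that reconcile these perspectives. Once the bookkeeping is settled, both identities are immediate consequences of the antipode convolution identity.
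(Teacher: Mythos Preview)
Your proof is correct and follows essentially the same route as the paper: coflatness \eqref{eq:SweedlerCoactionCoflatnessCompatibleProduct} to toggle between $c^{(1)}$ and $c_{(1)}^{(1)} c_{(2)}^{(1)}$, the antipode identity $S(h_{(1)})h_{(2)} = \counit(h)1_H$, and counitarity \eqref{eq:SweedlerCoactionCounitary}. The only cosmetic difference is direction---the paper starts from the right-hand side and collapses $S(c_{(1)}^{(1)}) c^{(1)}$ via coflatness and the antipode to reach the left-hand side, whereas you run the same three steps in reverse.
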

\begin{proof}
    This follows by the following calculation
    \begin{align*}
        c_{(1)}^{(0)} \tensor c_{(2)}^{(0)} \tensor S(c_{(1)}^{(1)})
        c^{(1)}
        &\stackrel{(*)}{=} c_{(1)}^{(0)} \tensor c_{(2)}^{(0)} \tensor
           S(c_{(1)}^{(1)}) c_{(1)}^{(1)} c_{(2)}^{(1)} \\
        &= c_{(1)}^{(0)} \tensor c_{(2)}^{(0)} \tensor
           \counit(c_{(1)}^{(1)}) c_{(2)}^{(1)} \\
        &= c_{(1)} \tensor c_{(2)}^{(0)} \tensor c_{(2)}^{(1)},
    \end{align*}
    where we used counitarity, the properties of the antipode and in
    ($*$) the compatibility with the Hopf
    product~\eqref{eq:SweedlerCoactionCoflatnessCompatibleProduct}.
    The second equation follows completely analogously.
\end{proof}

\subsection{Cocharacter groups}
\label{subsec:CocharacterGroups}%

In the representation theory of groups the \emph{characters} play a
fundamental role. For Lie groups also the notion of an infinitesimal
character (or derivations) is useful. We are interested in a theory of
\emph{cocharacters}, i.e. the theory dual to that of characters. This
theory will play an important role in characterizing the kernel of the
forgetful morphism from the equivariant Picard group to the standard
Picard group. In this section we make use of the characteristic zero
condition on $R$, as we need to be able to divide through integers and
hence $\mathbb{Q} \subseteq R$ is crucial.

A nice investigation of character groups of Hopf algebras with trivial
action is given in \cite{bogfjellmo.dahmen.schmedig:2015a}, the
equivariant version, even for a noncommutative target algebra, was
introduced in \cite{jansen.waldmann:2006a}. Basically, one considers
the algebra $\Hom(H, \algebra{A})$ of linear morphisms from the Hopf
algebra $H$ to the algebra $\algebra{A}$ with product being the
convolution product and finds suitable subgroups of this algebra as
character groups.

We now go the other way round and pair a coalgebra $C$ with a Hopf
algebra $H$, that is we consider the set $\Hom(C, H)$ with convolution
product. Recall that for the set of linear maps from a coalgebra $C$
to an algebra $\algebra{A}$ one has the convolution product
\begin{equation}
    \label{eq:ConvolutionProductAgain}%
    f \conv g = \mu \circ (f \tensor g) \circ \Delta
\end{equation}
for $f,g \in \Hom(C,\algebra{A})$. This convolution product endows
$\Hom(C, \algebra{A})$ with the structure of an algebra. For a Hopf
algebra $H$ one can apply this on both sides, since $H$ has a
coalgebra and an algebra structure.

The first part is remodelling the group of which the character groups
are subgroups. We have to replace the unit by the counit. We call a
morphism $\phi \in \Hom(C,H)$ \emph{counitary} if it satisfies
$\counit_H \circ \phi = \counit_C$. We set
\begin{equation}
    \label{eq:CounitarityMonoidCocharacters}
    \Gamma_0 = \{ \phi \in \Hom(C,H) \; | \; \phi \text{ is counitary
      and convolution invertible}\}
\end{equation}
This is completely dual to the situation in the definition of unitary
characters as the unitarity condition $\phi(1) = 1$ is actually
$\phi \circ \unit_H = \unit_{\algebra{A}}$ due to linearity. The
corresponding Lie algebra is
\begin{equation}
    \label{eq:CochcaracterLieAlgebra}
    \lie{c}_0 = \{\phi \in \Hom(C,H) \; | \; \counit_H \circ \phi = 0
      \}.
\end{equation}
Obviously $\Gamma_0$ forms a group with respect to the the convolution
product $\conv$ and $\lie{c}_0$ a Lie algebra with Lie bracket being
the commutator to the convolution product. The $\lie{c}_0$ injects
into $\Gamma_0$ via the convolution exponential
$\exp_\conv(\phi) = \sum_{k \geq 0} \frac{\phi^{\conv k}}{k!}$ if $H$
is a filtered Hopf algebra. Recall that a filtration of a Hopf algebra
$H$ is a filtration of the underlying $R$-module, i.e.\ one has
submodules
$H^0 \subseteq H^1 \subseteq \dots \subseteq H^n \subseteq \dots$ with
$\bigcup_{n \geq 0} H^n = H$, such that one has
$\mu(H^p \tensor H^q) \subseteq H^{p+q}$,
$\Delta(H^n) \subseteq \sum_{p+q = n} H^p \tensor H^q$, and
$S(H^n) \subseteq H^n$. As example of a filtered Hopf algebra consider
the following construction of the coradical filtration. Consider $H$
as $(H,H)$-bicomodule and let $H^0 = \mathsf{soc} \; H$ be the
coradical, i.e.\ the sum of all simple subcoalgebras of $H$. If the
coradical $H^0$ is a Hopf subalgebra then one can recursively define a
filtration of $H$ via
\begin{equation}
    \label{eq:CoradicalFiltration}
    H^n = \Delta^{-1}( H \tensor H^0 + H^{n-1} \tensor H)
\end{equation}
which is a Hopf algebra filtration, see \cite[Thm.\ II.2.2, Rem.\
1]{Manchon.2001}. The condition that $H^0$ is a Hopf subalgebra is for
example fulfilled for every pointed Hopf algebra, i.e.\ Hopf algebras
whose simple Hopf subalgebra are all one-dimensional. These include
all cocommutative Hopf algebras, e.g.\ the group algebras and
universal enveloping algebras of Lie algebras.

Let us denote by $\coacts$ the coaction of $H$ on the coalgebra $C$.
We adapt the extended Sweedler notation from
Remark~\ref{remark:ExtendingSweedlerNotationToCoaction}.
\begin{definition}[$H$-equivariant cocharacters]
    \label{definition:CocharacterGroupOfHopfAlgebra}%
    An element $\phi \in \Hom(C,H)$ is called $H$-equivariant
    cocharacter if it is convolution invertible and satisfies the
    following axioms:
    \begin{definitionlist}
        \item \label{item:CocharacterCounitary}
        $\counit_H \circ \phi = \counit_C$,
        \item \label{item:CocharacterCoactionCondition}
        $\phi(c)^{(1)} \tensor \phi(c)^{(2)} = \phi(c_{(1)}^{(0)})
        \tensor c_{(1)}^{(1)} \phi(c_{(2)})$,
        \item \label{item:CocharacterComoduleCondition}
        $c_{(1)}^{(0)} \tensor c_{(1)}^{(1)} \phi(c_{(2)})
        = c_{(2)}^{(0)} \tensor \phi(c_{(1)}) c_{(2)}^{(1)}$,
    \end{definitionlist}
    for all $c \in C$. The set of all $H$-equivariant cocharacters is
    denoted by
    \begin{equation}
        \label{eq:CocharacterGroup}
        \Gamma(C \Coacts H)
        = \{ \phi \in \Hom(C,H) \; | \; \phi \text{ is } H
        \text{-equivariant cocharacter} \}.
    \end{equation}
\end{definition}
Note that the conditions can be phrased completely in the terms of
morphisms as follows: for \eqref{item:CocharacterCoactionCondition}
one has
\begin{equation}
    \label{eq:CocharacterCoactionConditionInMorphisms}
    \Delta_H \circ \phi
    = (\id \tensor \mu) \circ (\phi \tensor \id \tensor \phi) \circ
    (\coacts \tensor \id) \circ \Delta_C
\end{equation}
and for \eqref{item:CocharacterComoduleCondition} one has
\begin{equation}
    \label{eq:CocharacterComoduleConditionInMorphisms}
    (\id \tensor \mu) \circ (\coacts \tensor \phi) \circ \Delta_C
    = (\id \tensor \mu) \circ \tau^{(1 2)} \circ (\phi \tensor \coacts)
      \circ \Delta_C.
\end{equation}
These conditions arise by dualizing the conditions of
\cite[Definition~A.1]{jansen.waldmann:2006a}. We call condition
\refitem{item:CocharacterCounitary} counit condition, condition
\refitem{item:CocharacterCoactionCondition} the coaction condition and
condition \refitem{item:CocharacterComoduleCondition} the comodule
condition.
\begin{proposition}
    \label{proposition:CocharactersFormGroup}%
    The set $\Gamma(C \Coacts H)$ forms a group. The inverse of $\phi
    \in \Gamma(C \Coacts H)$ is given by
    \begin{equation}
        \label{eq:CocharacterInverse}
        \phi^{\conv -1}(c) = S(\phi(c^{(0)})) c^{(1)}.
    \end{equation}
\end{proposition}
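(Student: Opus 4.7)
The plan is to verify three conditions for $\Gamma(C \Coacts H)$ to be a subgroup of $\Gamma_0$: containment of the convolution unit, closure under convolution, and closure under taking convolution inverses via the stated formula. For the neutral element I take $e = \unit_H \circ \counit_C$, the convolution unit of $\Hom(C,H)$; counitarity and the comodule condition \refitem{item:CocharacterComoduleCondition} for $e$ hold directly after collapsing the relevant counits in $C$, while the coaction condition \refitem{item:CocharacterCoactionCondition} uses $\Delta_H(1_H) = 1_H \otimes 1_H$. For closure under convolution, given $\phi, \psi \in \Gamma(C \Coacts H)$ and $\chi = \phi \conv \psi$, the counit condition for $\chi$ is immediate from $\counit_H$ being an algebra map, the coaction condition is obtained by expanding $\Delta_H(\chi(c)) = \Delta_H(\phi(c_{(1)})) \, \Delta_H(\psi(c_{(2)}))$ and applying \refitem{item:CocharacterCoactionCondition} to each factor before collecting terms via coassociativity of $\Delta_C$ and the coflatness relation \eqref{eq:SweedlerCoactionCoflatnessCompatibleProduct}, and the comodule condition follows by iterated application of \refitem{item:CocharacterComoduleCondition}.

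The main calculation is the verification of the inverse formula. I would first check that it is a left convolution inverse:
\begin{align*}
    (\phi^{\conv -1} \conv \phi)(c)
    &= S(\phi(c_{(1)}^{(0)})) \, c_{(1)}^{(1)} \, \phi(c_{(2)})
     = S(\phi(c)_{(1)}) \, \phi(c)_{(2)} \\
    &= \counit_H(\phi(c)) \cdot 1_H
     = \counit_C(c) \cdot 1_H,
\end{align*}
where the crucial step uses the coaction axiom \refitem{item:CocharacterCoactionCondition} in its morphism form \eqref{eq:CocharacterCoactionConditionInMorphisms}, followed by the antipode identity $S(h_{(1)}) h_{(2)} = \counit_H(h) \cdot 1_H$ and the counit condition for $\phi$. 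Since $\phi$ is convolution invertible by assumption, this left inverse coincides with the two-sided convolution inverse.

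It remains to check that $\phi^{\conv -1}$ itself lies in $\Gamma(C \Coacts H)$. The counit condition follows from $\counit_H \circ S = \counit_H$ combined with the coflatness counitarity \eqref{eq:SweedlerCoactionCoflatnessCounitarity}. I expect the main obstacle to be the coaction and comodule conditions for $\phi^{\conv -1}$: both require combining the anti-coalgebra identity $\Delta_H \circ S = \tau \circ (S \otimes S) \circ \Delta_H$ with Lemma \ref{lemma:AntipodeAndCoaction} and the iterated coaction identity \eqref{eq:SweedlerCoactionCompatibleWithComultiplication}, with careful bookkeeping of indices in the extended Sweedler notation.
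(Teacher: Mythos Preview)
Your proposal is correct and follows essentially the same route as the paper: verify closure under convolution using the coaction, comodule, and coflatness identities, then check that the stated formula is a convolution inverse via the coaction condition \refitem{item:CocharacterCoactionCondition} and the antipode identity, and finally verify that $\phi^{\conv -1}$ itself satisfies the three cocharacter axioms using Lemma~\ref{lemma:AntipodeAndCoaction} and the antipode's anti-(co)algebra properties. One small omission in your sketch: when proving the coaction condition for $\phi \conv \psi$, applying \refitem{item:CocharacterCoactionCondition} to each factor alone is not enough---after expanding you obtain a term $c_{(1)}^{(1)} \phi(c_{(2)}) c_{(3)}^{(1)}$ in the second tensor slot, and you need the comodule condition \refitem{item:CocharacterComoduleCondition} for $\phi$ to commute $\phi(c_{(2)})$ past $c_{(3)}^{(1)}$ before the coflatness relation \eqref{eq:SweedlerCoactionCoflatnessCompatibleProduct} can be invoked; the paper makes this step explicit.
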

\begin{proof}
    Let $\phi, \psi \in \Gamma(C \Coacts H)$. We have to check that
    $\phi \conv \psi \in \Gamma(C \Coacts H)$. The counit condition is
    clear. For the coaction condition we have for $c \in C$
    \begin{align*}
        (\phi \conv \psi)(c)_{(1)} \tensor (\phi \conv \psi)(c)_{(2)}
        &= \phi(c_{(1)})_{(1)} \psi(c_{(2)})_{(1)} \tensor
           \phi(c_{(1)})_{(2)} \psi(c_{(2)})_{(2)} \\
        &= (\mu \tensor \mu) \circ \tau^{(2 3)} (\phi(c_{(1)})_{(1)}
           \tensor \phi(c_{(1)})_{(2)} \tensor \psi(c_{(2)})_{(1)}
           \tensor \psi(c_{(2)})_{(2)}) \\
        &\stackrel{(a)}{=} (\mu \tensor \mu) \circ \tau^{(2 3)}
           (\phi(c_{(1)}^{(0)}) \tensor c_{(1)}^{(1)} \phi(c_{(2)})
           \tensor \psi(c_{(3)}^{(0)}) \tensor c_{(3)}^{(1)}
           \psi(c_{(4)})) \\
        &= \phi(c_{(1)}^{(0)}) \psi(c_{(3)}^{(0)}) \tensor
           c_{(1)}^{(1)} \phi(c_{(2)}) c_{(3)}^{(1)} \psi(c_{(4)}) \\
        &\stackrel{(b)}{=} \phi(c_{(1)}^{(0)}) \psi(c_{(2)}^{(0)})
           \tensor c_{(1)}^{(1)} c_{(2)}^{(1)} \phi(c_{(3)})
           \psi(c_{(4)}) \\
        &\stackrel{(c)}{=} (\phi \conv \psi)(c_{(1)}^{(0)}) \tensor
        c_{(1)}^{(1)} (\phi \conv \psi)(c_{(2)})
    \end{align*}
    where in ($a$) we used the coaction condition
    \refitem{item:CocharacterCoactionCondition} for $\phi$ and $\psi$,
    in ($b$) the comodule condition
    \refitem{item:CocharacterComoduleCondition}, and in ($c$) we used
    \eqref{eq:SweedlerCoactionCoflatnessCompatibleProduct}. Checking
    the comodule condition is comparatively easy:
    \begin{align*}
        c_{(1)}^{(0)} \tensor c_{(1)}^{(1)} (\phi \conv \psi)(c_{(2)})
        &= c_{(1)}^{(0)} \tensor c_{(1)}^{(1)} \phi(c_{(2)})
           \psi(c_{(3)}) \\
        &\stackrel{\mathclap{\refitem{item:CocharacterComoduleCondition}}}{=}
           c_{(2)}^{(0)} \tensor \phi(c_{(1)}) c_{(2)}^{(1)}
           \psi(c_{(3)}) \\
        &\stackrel{\mathclap{\refitem{item:CocharacterComoduleCondition}}}{=}
            c_{(3)}^{(0)} \tensor \phi(c_{(1)}) \psi(c_{(2)})
            c_{(3)}^{(1)} \\
        &=  c_{(2)}^{(0)} \tensor (\phi \conv \psi)(c_{(1)})
            c_{(2)}^{(1)},
    \end{align*}
    for $c \in C$.

    The candidate for the inverse satisfies
    \begin{align*}
        \phi^{\conv -1} \conv \phi(c)
        &= S(\phi(c_{(1)}^{(0)})) c_{(1)}^{(1)} \phi(c_{(2)}) \\
        &\stackrel{(a)}{=} S(\phi(c)_{(1)}) \phi(c)_{(2)} \\
        &\stackrel{(b)}{=} e(\phi(c)) \\
        &\stackrel{(c)}{=} e(c),
    \end{align*}
    where in ($a$) we used the coaction condition of $\phi$, in ($b$)
    the fact that $S$ is the convolution inverse to the identity,
    applied via $S \conv \id(\phi(c))$, and in ($c$) we used the
    counitarity of $\phi$. Now we have to check that $\phi^{\conv -1}
    \in \Gamma(C \Coacts H)$. The counitarity of $\phi^{\conv -1}$ is
    clear by definition.
    For the comodule condition, we calculate
    \begin{align*}
        c_{(1)}^{(0)} \tensor c_{(1)}^{(1)} \phi^{\conv -1}(c_{(2)})
        &= c_{(1)}^{(0)} \tensor c_{(1)}^{(1)} S(\phi(c_{(2)}^{(0)}))
           c_{(2)}^{(1)} \\
        &\stackrel{(a)}{=} c_{(1)}^{(0)} \tensor c_{(1)}^{(1)}
           S(\phi(c_{(2)}^{(0)})) S(c_{(1)}^{(1)}) c^{(1)} \\
        &\stackrel{(b)}{=} c_{(1)}^{(0)} \tensor c_{(1)}^{(2)}
           S(c_{(1)}^{(1)} \phi(c_{(2)}^{(0)})) c^{(1)} \\
        &\stackrel{(c)}{=} c_{(2)}^{(0)} \tensor c_{(1)}^{(1)}
           S(\phi(c_{(1)}^{(0)}) c_{(2)}^{(1)}) c^{(1)} \\
        &= c_{(2)}^{(0)} \tensor c_{(1)}^{(1)} S(c_{(2)}^{(1)})
           S(\phi(c_{(1)}^{(0)})) c^{(1)} \\
        &= c_{(2)}^{(0)} \tensor S(\phi(c_{(1)}^{(0)})) c^{(1)} \\
        &\stackrel{(d)}{=} c_{(2)}^{(0)} \tensor
           S(\phi(c_{(1)}^{(0)})) c_{(1)}^{(1)} c_{(2)}^{(1)} \\
        &= c_{(2)}^{(0)} \tensor \phi^{\conv -1}(c_{(1)})
           c_{(2)}^{(1)}
    \end{align*}
    where in ($a$) we used the identity
    \eqref{eq:AntipodeAndCoaction}, in ($b$) we used the properties of
    the antipode, in ($c$) the comodule condition for $\phi$ and in
    ($d$) the compatibility of the coaction with product of the Hopf
    algebra.
    Finally we have to check the coaction condition. Here we have
    \begin{align*}
        \phi^{\conv -1}(c_{(1)}^{(0)}) \tensor c_{(1)}^{(1)}
        \phi^{\conv - 1}(c_{(2)})
        &= S(\phi(c_{(1)}^{(0)})) c_{(1)}^{(1)} \tensor c_{(1)}^{(2)}
           S(\phi(c_{(2)}^{(0)})) c_{(2)}^{(1)} \\
        &\stackrel{(a)}{=} S(\phi(c_{(1)}^{(0)})) c_{(1)}^{(1)}
           c_{(2)}^{(1)} \tensor S(c_{(1)}^{(1)}) c_{(1)}^{(2)}
           S(\phi(c_{(2)}^{(0)})) c_{(2)}^{(2)} \\
        &= S(\phi(c_{(1)}^{(0)})) c^{(1)} \tensor
           S(\phi(c_{(2)}^{(0)})) \counit(c_{(1)}) c_{(2)}^{(1)} \\
        &= S(\phi(c_{(1)}^{(0)})) c^{(1)} \tensor
           S(\phi(c_{(2)}^{(0)})) S(c_{(1)}^{(1)}) c_{(1)}^{(2)}
           c_{(2)}^{(2)} \\
        &= S(\phi(c_{(1)}^{(0)})) c^{(1)} \tensor S(c_{(1)}^{(1)}
           \phi(c_{(2)}^{(0)})) c^{(2)} \\
        &\stackrel{(b)}{=} (S(\phi(c^{(0)}))^{(1)} c^{(1)} \tensor
           (S(\phi(c^{(0)})))^{(2)} c^{(2)} \\
        &= \phi^{\conv -1}(c)^{(1)} \tensor \phi^{\conv -1}(c)^{(2)}
    \end{align*}
    where we used in ($a$) the fact that $\id \conv S = e$, whence
    $\counit \circ \Delta = (\id \tensor S) \circ \Delta$, in ($b$)
    the coaction condition for $\phi$ and that $S$ is a coalgebra
    homomorphism, and the properties of antipode and counit.
\end{proof}
\begin{example}
    \label{example:TrivialCocharacterOnGrouplike}%
    Consider the a group-like coalgebra $R[S]$ for some set $S$ and a
    Hopf algebra $H$. Assume the trivial coaction $\coacts \colon R[S]
    \to R[S] \tensor H$ given by $\coacts s = s \tensor 1$. Then the
    cocharacter group $\Gamma(C \Coacts H)$ is given by those
    morphisms $\phi \in \Hom(C, H)$ which send group-like elements to
    group-like elements, as the coaction condition in this case reads
    \begin{equation}
        \label{eq:TrivialCocharacterOnGrouplike}
        \phi(s)^{(1)} \tensor \phi(s)^{(2)} = \phi(s) \tensor \phi(s)
    \end{equation}
    for all $s \in S$.
\end{example}

There is a corresponding infinitesimal notion of coderivations:
\begin{definition}[$H$-equivariant coderivations]
    \label{definition:EquivariantCoderivations}%
    An element $\phi \in \Hom(C,H)$ is called $H$-equivariant
    coderivation if it is convolution invertible and satisfies the
    following:
    \begin{definitionlist}
        \item \label{item:EquivCoderUnit}
        $\counit_H \circ \phi = 0$
        \item \label{item:EquivCoderCoaction}
        $\phi(c)^{(1)} \tensor \phi(c)^{(2)}
        = e(c_{(1)}^{(0)}) \tensor c_{(1)}^{(1)} \phi(c_{(2)}) +
          \phi(c_{(1)}^{(0)}) \tensor c_{(1)}^{(1)} e(c_{(2)})$
        \item \label{item:EquivCoderComodule}
        $c_{(1)}^{(0)} \tensor c_{[1)}^{(1)} \phi(c_{(2)})
        = c_{(2)}^{(0)} \tensor \phi(c_{(1)}) c_{(2)}^{(1)}$,
    \end{definitionlist}
    for all $c \in C$. The set of all $H$-equivariant coderivations is
    denoted by $\lie{c}(C \Coacts H)$.
\end{definition}
\begin{proposition}
    \label{proposition:EquivCoderivationExpToCocharacters}%
    If $H$ is filtered, the set $\lie{c}(C \Coacts H)$ injects into
    $\Gamma(C \Coacts H)$ via $\exp(\phi) = \sum_{k = 0}^\infty
    \frac{\phi^{\conv k}}{k!}$.
\end{proposition}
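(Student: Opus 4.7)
The plan is to verify the three defining properties of a cocharacter for $\exp_\conv(\phi)$, check convolution invertibility, and then argue injectivity of the map.

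I would first address well-definedness. Since $\counit_H \circ \phi = 0$, the image of $\phi$ lies in the augmentation ideal $\ker\counit_H$. Combined with the coradical filtration of $H$ and the compatibility $\Delta_H(H^n) \subseteq \sum_{p+q=n} H^p \tensor H^q$, a standard filtration argument shows that $\phi^{\conv k}$ takes values in increasingly deep filtration pieces, so $\exp_\conv(\phi) \in \Hom(C,H)$ makes sense; the division by $k!$ is allowed because $\mathbb{Q} \subseteq R$. The counit condition is then immediate: since $\counit_H$ is an algebra morphism, $\counit_H \circ \phi^{\conv k} = (\counit_H \circ \phi)^{\conv k} = 0$ for $k \geq 1$, leaving only the $k=0$ term $\counit_C$. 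Convolution invertibility follows from $\exp_\conv(\phi) \conv \exp_\conv(-\phi) = \exp_\conv(0) = e$, where $\phi$ and $-\phi$ commute under $\conv$ trivially. The comodule condition \refitem{item:EquivCoderComodule} of a coderivation is identical to the comodule condition \refitem{item:CocharacterComoduleCondition} of a cocharacter, and the calculation at the end of the proof of Proposition~\ref{proposition:CocharactersFormGroup} shows that this condition is closed under $\conv$; induction then gives it for every $\phi^{\conv k}$, and linearity extends it to $\exp_\conv(\phi)$.

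The main obstacle is the coaction condition \refitem{item:CocharacterCoactionCondition}. I would prove by induction on $k$ the Leibniz-type identity
\begin{equation*}
\Delta_H\bigl(\phi^{\conv k}(c)\bigr)
= \sum_{i=0}^{k} \binom{k}{i}\, \phi^{\conv i}\bigl(c_{(1)}^{(0)}\bigr) \tensor c_{(1)}^{(1)}\, \phi^{\conv (k-i)}(c_{(2)}),
\end{equation*}
with $\phi^{\conv 0} = e$. The case $k = 0$ is just the statement that $e$ is a cocharacter (the identity of $\Gamma(C \Coacts H)$), and the case $k = 1$ is precisely the coderivation axiom \refitem{item:EquivCoderCoaction}. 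For the step from $k$ to $k+1$, I would write $\phi^{\conv(k+1)} = \phi \conv \phi^{\conv k}$, apply $\Delta_H$ as an algebra morphism, substitute the induction hypothesis together with the base case, use the coflatness identity \eqref{eq:SweedlerCoactionCoflatnessCompatibleProduct} to align the Sweedler indices arising from $\Delta_C$ applied to $c_{(1)}^{(0)}$ and $c_{(2)}$, and combine binomial coefficients via Pascal's identity. Dividing by $k!$ and summing, the factor $\binom{k}{i}/k! = 1/(i!(k-i)!)$ causes the double sum to factorize after reindexing $j = k - i$, yielding
\begin{equation*}
\Delta_H\bigl(\exp_\conv(\phi)(c)\bigr)
= \exp_\conv(\phi)\bigl(c_{(1)}^{(0)}\bigr) \tensor c_{(1)}^{(1)}\, \exp_\conv(\phi)(c_{(2)}),
\end{equation*}
which is the desired coaction condition.

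Finally, injectivity of $\exp_\conv$ comes from the convolution logarithm $\log_\conv(\psi) = \sum_{k \geq 1} (-1)^{k+1}(\psi - e)^{\conv k}/k$, which converges on counitary $\psi$ (equivalently, $\psi - e$ takes values in $\ker\counit_H$) by the same filtration argument used for $\exp_\conv$, and satisfies $\log_\conv \circ \exp_\conv = \id$ formally in the convolution algebra. Hence $\exp_\conv(\phi) = \exp_\conv(\phi')$ forces $\phi = \phi'$.
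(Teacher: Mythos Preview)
Your proof is correct and follows essentially the same strategy as the paper: the core step in both is the inductive Leibniz-type identity $\Delta_H(\phi^{\conv k}(c)) = \sum_{i=0}^{k}\binom{k}{i}\phi^{\conv i}(c_{(1)}^{(0)})\tensor c_{(1)}^{(1)}\phi^{\conv(k-i)}(c_{(2)})$, followed by the Cauchy-product rearrangement of the exponential series to obtain the coaction condition. You supply more detail than the paper on the auxiliary points---convergence via the filtration, counitarity, the comodule condition via closure of \refitem{item:CocharacterComoduleCondition} under $\conv$, convolution invertibility, and injectivity via $\log_\conv$---whereas the paper dispatches these with ``follow immediately'' and ``the injection is clear''.
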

\begin{proof}
    The filtration of $H$ is needed to ensure convergence of the
    exponential map.

    We first have to show that 
    \begin{equation*}
        \phi^{\conv n}(c)^{(1)} \tensor \phi^{\conv n}(c)^{(2)}
        = \sum_{k=0}^n \binom{n}{k} \phi^{\conv k}(c_{(1)}^{(0)})
        \tensor c_{(1)}^{(1)} \phi^{\conv(n-k)}(c_{(2)})
    \end{equation*}
    holds for all for $\phi \in \lie{c}(C \Coacts H)$ and all
    $c \in C$. This is done by induction, for $n=1$ it is just the
    infinitesimal coaction condition, and for $n \to n+1$ we have
    \begin{align*}
        \Delta_H \circ \phi^{\conv n+1}(c)
        &= \mu( \Delta_H(\phi(c_{(1)})) \tensor \Delta_H(\phi^{\conv
           n}(c_{(2)}))) \\
        &= \sum_{k=0}^n \binom{n}{k} (e(c_{(1)}^{(0)}) \tensor
           c_{(1)}^{(1)} \phi(c_{(2)}) + \phi(c_{(1)}^{(0)}) \tensor
           c_{(1)}^{(1)} e(c_{(2)})) (\phi^{\conv k}(c_{(3)}^{(0)})
           \tensor c_{(3)}^{(1)} \phi^{\conv n-k}(c_{(4)})) \\
        &= \sum_{k=0}^n \binom{n}{k} (\phi^{\conv k}(c_{(1)}^{(0)})
           \tensor c_{(1)}^{(1)} \phi^{\conv n+1-k}(c_{(2)}) +
           \phi^{\conv k+1}(c_{(1)}^{(0)}) \tensor c_{(1)}^{(1)}
           \phi^{\conv n-k}(c_{(2)})) \\
        &= \sum_{k=0}^n \binom{n}{k} \phi^{\conv k}(c_{(1)}^{(0)})
           \tensor c_{(1)}^{(1)} \phi^{\conv n+1-k}(c_{(2)}) +
           \sum_{k=1}^{n+1} \binom{n}{k-1} \phi^{\conv
             k}(c_{(1)}^{(0)}) \tensor c_{(1)}^{(1)} \phi^{\conv n -
             (k+1)}(c_{(2)}) \\
        &= c_{(1)}^{(1)} \phi^{\conv n+1}(c_{(2)}) + \sum_{k=1}^{n+1}
           \binom{n+1}{k} \phi^{\conv k}(c_{(1)}^{(0)}) \tensor
           c_{(1)}^{(1)} \phi^{\conv n+1-k}(c_{(2)}) + \phi^{\conv
             n+1}(c_{(1)}^{(0)}) \\
        &= \sum_{k=0}^{n+1} \binom{n+1}{k} \phi^{\conv
            k}(c_{(1)}^{(0)}) \tensor c_{(1)}^{(1)} \phi^{\conv
            n+1-k}(c_{(2)}).
    \end{align*}
    This now allows to show that the exponential of the infinitesimal
    coaction condition is indeed the coaction condition:
    \begin{align*}
        \exp(\phi)(c)^{(1)} \tensor \exp(\phi)(c)^{(2)}
        &= \sum_{n \geq 0} \frac{1}{n!} \phi^{\conv n}(c)^{(1)}
           \tensor \phi^{\conv n}(c)^{(2)} \\
        &= \sum_{n \geq 0} \frac{1}{n!} \sum_{k=0}^n \binom{n}{k}
           \phi^{\conv k}(c_{(1)}^{(0)}) \tensor c_{(1)}^{(1)}
           \phi^{\conv n-k}(c_{(2)}) \\
        &= \sum_{k=0}^\infty \sum_{n=k}^\infty \frac{1}{k! (n-k)!}
           \phi^{\conv k}(c_{(1)}^{(0)}) \tensor c_{(1)}^{(1)}
           \phi^{\conv n-k}(c_{(2)}) \\
        &= \sum_{k=0}^\infty \frac{1}{k!} \phi^{\conv k}(c_{(1)}^{(0)})
           \tensor \sum_{n=0}^\infty \frac{1}{n!} c_{(1)}^{(1)}
           \phi^{\conv n}(c_{(2)}) \\
        &= \exp(\phi)(c_{(1)}^{(0)}) \tensor c_{(1)}^{(1)}
           \exp(\phi)(c_{(2)}).
    \end{align*}
    The counit and comodule condition follow immediately, hence the
    injection is clear.
\end{proof}

\subsection{The schism complex}
\label{sec:HopfCohomology}%

Recall that the first Hochschild cohomology group classifies the
derivations up to inner derivations. Hence one might wonder, if one
can define a cohomology theory for Hopf algebras, which has the
character groups as 1-cocycles and is sort of exponential to the
Hochschild cohomology as we can exponentiate the derivations to the
characters via the convolution exponential. This is done for
cocommutative Hopf algebras with coefficients in a commutative algebra
via \emph{Sweedler cohomology} see \cite{Sweedler:1968}. In the
noncommutative setting there are attempts to generalize this
cohomology theory. The definition of the first noncommutative
cohomology group is straight-forward but the second one has some more
serious obstructions to tackle. Some attempts can be found in the
articles of Schauenburg~\cite{Schauenburg:2001}, and Bichon and
Carnovale~\cite{Bichon.Carnovale:2006}.

In this subsection we are interested in how one can implement a
similar setting for cocharacter groups. This is we build a theory
which observes how one can split a coalgebra under the presence of a
coaction of a Hopf algebra $H$ and then tries to conclude something
about the Hopf algebra. As in the case of cohomology we are not
interested the whole cocharacter group, but only the ones, which are
not inner ones. These will then be defined as first homoschism group.
Note that schism is the greek word for split. The zeroth homoschism
group will be defined to coincide with the coinvariants of the Hopf
coaction on the coalgebra.

We first define the general zeroth and first homoschism group, and
then embedd them in the commutative, cocommutative case into a bigger
setting somewhat dual to that of Sweedler cohomology for Hopf
algebras~\cite{Sweedler:1968}. The definition of the
noncommutative homoschism groups is motivated by the definition of the
zeroth and first noncommutative homology groups of Hopf algebras.

Recall that a coalgebra morphism $f \colon C \to D$ is cocentral if it
cocommutes with the identity, that is
$f(c_{(1)}) \tensor c_{(2)} = f(c_{(2)}) \tensor c_{(1)}$ for all
$c \in C$. The cocenter of a coalgebra is the subcoalgebra missing the
elements which spoil cocommutativity, i.e.\
$\Zentrum(C) = C \big/ \bigcap_{f \text{ cocentral}} \ker f$ see
\cite{Torrecillas.Zhang:1996} for a more concrete construction.
\begin{definition}[Noncommutative homoschism groups]
    \label{definition:ZerothAndFirstCohomoschismGroup}%
    Let $H$ be a Hopf algebra and let $C$ be a coalgebra with coaction
    of $H$. The zeroth homoschism group of $H$ with covalues in $C$ is
    the group of coinvariants
    \begin{equation}
        \label{eq:ZerothHomoschismGroup}
        S^0(H,C) = \Zentrum(C)^{\co H}.
    \end{equation}
    The first homoschism group of $H$ with covalues in $C$ is the
    group of cocharacters modulo the inner ones
    \begin{equation}
        \label{eq:FirstHomoschismGroup}
        S^1(H,C) = \Gamma(C \Coacts H) \big/ \injco{\Zentrum(C)},
    \end{equation}
    where $\injco{\argument} \colon \Zentrum(C) \to \Gamma(C \Coacts
    H)$ is given by
    \begin{equation}
        \label{eq:CoinjectionCocenterIntoCocharacters}
        \injco{c} = ((\delta_c \tensor \id) \circ \coacts) \conv
        (\delta_c \tensor 1)^{\conv -1}
    \end{equation}
    for all $c \in \Zentrum(C)$ and $\delta_c \in \Zentrum(C)^*$ is
    the corresponding delta functional.
\end{definition}
One can mod out the image of the cocenter, since by dualizing one gets
a central subgroup. As we will see later in
Section~\ref{sec:EquivariantMoritaTakeuchi} the first Homoschism group
allows to classify coaction lifts to comodules.

We are now interested embedding these two groups, at least in the
commutative, cocommutative setting, into a larger theory. For the
Sweedler cohomology one defines various groups of linear maps going
from tensor powers of the Hopf algebra to the algebra. So the idea is
to define groups of linear maps going from the coalgebra to tensor
powers of the Hopf algebra. These tensor powers of the Hopf algebra
carry an algebra structure.

Recall that the differential for the Sweedler cohomology is defined by
succesively combining elements of the Hopf algebra in various manners
and then sending it to the algebra. As the name schism complex
suggests, one would suspect to have the differential between the
groups defined by succesively splitting the coalgebra elements and
then sending them to the Hopf algebra. For the splitting process we
can either use the coaction or the comultiplication of the Hopf
algebra. Hence it is natural to consider all possible combinations of
these.

We justified the following definition.
\begin{definition}[Schism complex]
    \label{definition:SchismComplex}%
    Let $H$ be a commutative Hopf algebra and $C$ be a cocommutative
    coalgebra. The schism complex consists of the sequence
    $\Gamma_0^\bullet(C,H)$ of groups defined by
    $\Gamma_0^q(C,H) = \Gamma_0(C, H^{\tensor q})$ and the
    differential, which is defined for
    $\D^{q-1} \colon \Gamma_0^{q-1}(C,H) \to \Gamma_0^{q}(C, H)$ by
    \begin{align}
        \label{eq:SchismDifferential}
        \nonumber
        \D^{q-1}(f)
        &= ((f \tensor \id) \circ \coacts) \conv ((\Delta_H \tensor
        \id \tensor \dots \tensor \id) \circ f)^{\conv - 1} \conv ((\id
        \tensor \Delta_H \tensor \id \tensor \dots \tensor \id) \circ
        f) \conv  \\
        &\quad \dots \conv ((\id \tensor \dots \tensor \id \tensor
        \Delta_H) \circ f)^{\conv \pm 1} \conv (f \tensor 1)^{\conv \mp 1}
    \end{align}
    for $f \in \Gamma_0^{q-1}(C,H)$. The schism complex is denoted by
    $(\Gamma_0^\bullet(C, H), \D^\bullet)$.
\end{definition}
This is indeed a cochain complex:
\begin{theorem}
    \label{theorem:SchismDifferentialIsDifferential}%
    If $H$ is a commutative Hopf algebra, we have $\D \circ \D = e$
    for the schism differential.
\end{theorem}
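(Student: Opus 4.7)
The plan is to verify $\D \circ \D = e$ by interpreting the differential $\D^{q-1}$ as an alternating convolution product of ``face maps''
\[
    \partial^i \colon \Gamma_0^{q-1}(C, H) \to \Gamma_0^{q}(C, H), \quad i = 0, 1, \ldots, q,
\]
where $\partial^0 f = (f \tensor \id_H) \circ \coacts$, $\partial^i f = (\id^{\tensor i-1} \tensor \Delta_H \tensor \id^{\tensor q-1-i}) \circ f$ for $1 \leq i \leq q-1$, and $\partial^q f = f \tensor 1$. With this notation the definition reads $\D^{q-1} f = \partial^0 f \conv (\partial^1 f)^{\conv -1} \conv \partial^2 f \conv \cdots \conv (\partial^q f)^{\conv (-1)^q}$. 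The first preparatory step is to verify that each $\partial^i$ is a homomorphism of the convolution group: for $1 \leq i \leq q-1$ this follows because $\Delta_H$ is an algebra map and hence commutes with component-wise multiplication in $H^{\tensor q}$; for $i = q$ it is immediate. The key case is $i = 0$, which relies on the coflatness axiom~\eqref{eq:SweedlerCoactionCoflatnessCompatibleProduct} together with Definition~\ref{definition:HopfCoaction}~\refitem{item:CoactionsIsComodule}.

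Since the convolution algebras $\Hom(C, H^{\tensor n})$ are commutative (as $H$ is commutative and $C$ cocommutative), one may then expand
\[
    \D^q \D^{q-1} f
    = \prod_{j = 0}^{q+1} \bigl(\partial^j \D^{q-1} f\bigr)^{\conv (-1)^j}
    = \prod_{i = 0}^{q} \prod_{j = 0}^{q+1} \bigl(\partial^j \partial^i f\bigr)^{\conv (-1)^{i+j}},
\]
where $\prod$ denotes iterated convolution. It remains to show that this double convolution product collapses to the identity. The cancellations come from three families of identities among the composed face maps: (i)~coassociativity of $\Delta_H$ yields $\partial^j \partial^i = \partial^i \partial^{j-1}$ whenever $1 \leq i < j$ both correspond to $\Delta_H$-insertions; (ii)~the comodule axiom~\eqref{eq:SweedlerCoactionCompatibleWithComultiplication} relates the double coaction $\partial^0 \partial^0$ to the insertion of $\Delta_H$ at the rightmost slot after a single coaction; (iii)~the $\cdot \tensor 1$ face commutes with each of the other faces, using $\Delta_H(1) = 1 \tensor 1$ and the counitarity condition \eqref{eq:SweedlerCoactionCounitary} of $\coacts$.

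Assembling these identities together with the commutativity of the codomain group, every composite $\partial^j \partial^i f$ in the double product is paired with another composite of equal value and opposite sign, so the total telescopes to $e$. The main obstacle is the second family of identities: the coaction face $\partial^0$ does not satisfy the naive cosimplicial relation $\partial^j \partial^0 = \partial^0 \partial^{j-1}$ on the nose, but only a twisted variant in which the $\Delta_C$-indices of $c$ and of $c^{(0)}$ must be interchanged using the combined coalgebra-coaction identities of Definition~\ref{definition:HopfCoaction}. Sorting out this bookkeeping so that every term is matched precisely once is the technical heart of the argument.
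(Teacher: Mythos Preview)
Your approach coincides with the paper's. The paper also first establishes multiplicativity of $\D$ (equivalently, of each face map) using the coflatness axiom for the coaction term, then expands $\D^q\D^{q-1}f$ term by term and asserts that ``with shrewd eyes one sees that for each term there is one with the opposite sign.'' Your organisation via face operators $\partial^i$ and cosimplicial-type identities is a cleaner packaging of exactly the same computation, and you correctly flag that the interaction of the coaction face $\partial^0$ with the others is the delicate point.

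One small inaccuracy in your family~(iii): the $\cdot\tensor 1$ face does \emph{not} literally commute with the coaction face. For instance
\[
    (\partial^{q+1}\partial^0 f)(c)=f(c^{(0)})\tensor c^{(1)}\tensor 1,
    \qquad
    (\partial^0\partial^q f)(c)=f(c^{(0)})\tensor 1\tensor c^{(1)},
\]
which differ as maps into $H^{\tensor(q+1)}$. So the pairing of these two terms is not covered by~(iii) as stated and must be absorbed into the same ``twisted'' bookkeeping you already anticipate for family~(ii). This does not change the strategy --- it just means that the special handling you reserve for $\partial^0$ against the $\Delta_H$-faces is equally needed for $\partial^0$ against the $\tensor 1$-face, and the full cancellation pattern is slightly less cosimplicially clean than the three-family description suggests. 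The paper's proof is no more explicit on this point than yours.
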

\begin{proof}
    Thanks to commutativity of $H$ and cocommutativity of $C$ the
    convolution $\conv$ is commutative. We want to show, that we can
    apply $\D$ to each term separately, i.e. one has $\D(f \conv g) =
    \D(f) \conv \D(g)$ for all $f,g \in \Gamma_0^{q-1}(C \Coacts H)$,
    $q \geq 1$. Since $\Gamma_0^q(C,H)$ is abelian, the only
    problematic part is the one concerning the coaction. So let $f,g
    \in \Gamma_0^{q-1}(C,H)$ and $c \in C$ then
    \begin{align*}
        ((f \conv g) \tensor \id)(c^{(0)} \tensor c^{(1)})
        &= f(c_{(1)}^{(0)}) g(c_{(2)}^{(0)}) \tensor c^{(1)} \\
        &\stackrel{(*)}{=} f(c_{(1)}^{(0)}) g(c_{(1)}^{(0)}) \tensor
           c_{(1)}^{(1)} c_{(2)}^{(1)} \\
        &= ((f \tensor \id) \circ \coacts) \conv ((g \tensor \id)
           \circ \coacts) (c)
    \end{align*}
    where in ($*$) we used the coflatness of the coaction.

    Now we can check $\D^q \circ \D^{q-1} = e$. So let $f \in
    \Gamma_0^{q-2}(C,H)$ then one has
    \begin{align*}
        \D^{q-1}(f)
        &= ((f \tensor \id) \circ \coacts) \conv ((\Delta_H \tensor
        \id \tensor \dots \tensor \id) \circ f)^{\conv - 1} \conv ((\id
        \tensor \Delta_H \tensor \id \tensor \dots \tensor \id) \circ
        f) \conv  \\
        &\quad \dots \conv ((\id \tensor \dots \tensor \id \tensor
        \Delta_H) \circ f)^{\conv \pm 1} \conv (f \tensor 1)^{\conv \mp 1}.
    \end{align*}
    Thanks to $\D(f \conv g) = \D(f) \conv \D(g)$, we can now apply
    $\D^q$ to each term separately and get for the first term, using
    the notation $\Delta_H^i = (\id \tensor \dots \tensor \Delta_H
    \tensor \dots \tensor \id)$ with $\Delta_H$ at the $i$-th position,
    \begin{align*}
        \D^q((f \tensor \id) \circ \coacts)
        &= ((((f \tensor \id) \circ \coacts) \tensor \id) \circ
           \coacts) \conv (\Delta_H^1
           \circ ((f \tensor \id) \circ \coacts))^{\conv -1} \conv
           \dots \\
        &\quad \dots \conv (\Delta_H^{q} \circ ((f \tensor \id) \circ
           \coacts))^{\conv \pm  1} \conv (((f \tensor \id) \circ
           \coacts) \tensor 1)^{\conv \mp 1} \\
        &= ((f \tensor \id) \circ (\id \tensor \Delta_H) \circ
           \coacts) \conv (\Delta_H^1 \circ ((f \tensor \id) \circ
           \coacts))^{\conv -1} \conv \dots \\
        &\quad \dots \conv (\Delta_H^{q} \circ ((f \tensor \id) \circ
           \coacts))^{\conv \pm  1} \conv (((f \tensor \id) \circ
           \coacts) \tensor 1)^{\conv \mp 1}
    \end{align*}
    for the $i$-th middle term
    \begin{align*}
        \D^q(( \Delta_H^i \circ f)^{\conv \pm 1})
        &= (((\Delta_H^i \circ f)^{\conv \pm 1} \tensor \id) \circ \coacts)
           \conv   (\Delta_H^1 \circ (\Delta_H^i \circ f)^{\conv
           \pm 1})^{\conv - 1} \conv \dots \\
        &\quad \dots \conv (\Delta_H^q \circ (\Delta_H^i \circ
           f)^{\conv \pm 1})^{\conv \mp 1} \conv ((\Delta_H^i \circ
           f)^{\conv \pm 1} \tensor 1)^{\conv \pm 1}
    \end{align*}
    and for the end term
    \begin{align*}
        \D^q((f \tensor 1)^{\conv \mp 1})
        &= ((f \tensor 1)^{\conv \mp 1} \circ \coacts) \conv
           (\Delta_H^1 \circ (f \tensor 1)^{\conv \mp 1})^{\conv -1}
           \conv \dots \\
        &\quad \dots \conv (\Delta_H^q \circ (f \tensor 1)^{\conv \mp
           1})^{\conv \mp 1} \conv ((f \tensor 1)^{\mp 1} \tensor
           1)^{\conv \pm 1}
    \end{align*}
    Now with shrewd eyes one sees that for each term there is one with
    the opposite sign, hence we get $\D \circ \D = e$.
\end{proof}
Next we investigate the cohomology groups of the schism complex.
\begin{definition}[Homoschism groups]
    \label{definition:HomoschismGroups}%
    The homoschism groups for a commutative Hopf algebra $H$ with
    covalues in a cocommutative coalgebra $C$ is defined by
    \begin{equation}
        \label{eq:ZerothHomoschismGroupComplex}
        S^0(C,H) = \ker \D^0
    \end{equation}
    and
    \begin{equation}
        \label{eq:HigherHomoschismGroups}
        S^q(C,H) = \ker \D^q \big/ \image \D^{q-1}
    \end{equation}
    for $q \geq 1$.
\end{definition}
We have to check that in the commutative case both definitions of
$S^0(H,C)$ and $S^1(H,C)$ coincide.
\begin{proposition}
    \label{proposition:DefsOfHomoschismGroupsCoincide}%
    The definitions of the homoschism groups given in
    Definition~\ref{definition:ZerothAndFirstCohomoschismGroup} and
    Definition~\ref{definition:HomoschismGroups} coincide in the
    commutative, cocommutative case.
\end{proposition}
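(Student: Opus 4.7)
The plan is to exhibit natural identifications $\Zentrum(C)^{\co H} \cong \ker \D^0$ and $\Gamma(C \Coacts H)/\injco{\Zentrum(C)} \cong \ker \D^1 / \image \D^0$ by unwinding the schism differential in low degree and matching it term by term with the data in Definition \ref{definition:ZerothAndFirstCohomoschismGroup}.

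For the zeroth group, I would compute $\D^0$ explicitly. Since there are no middle $\Delta_H$ factors available on $\Gamma_0^0(C,H)$, the differential collapses to the two-term expression $\D^0(f) = ((f \tensor \id) \circ \coacts) \conv (f \tensor 1)^{\conv -1}$. The condition $\D^0(f) = e$ then reads $f(c^{(0)}) c^{(1)} = f(c) \cdot 1_H$ for every $c \in C$, which under the natural delta-functional identification on the cocenter corresponds exactly to the coinvariance condition $\coacts(c) = c \tensor 1$ in $\Zentrum(C)$.

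For the first group, the argument splits into two parts. First, I would show $\ker \D^1 = \Gamma(C \Coacts H)$. Unwinding $\D^1(\phi)$ for $\phi \in \Gamma_0^1(C,H) = \Gamma_0(C,H)$ gives
\[
    \D^1(\phi)(c) = (\phi(c_{(1)}^{(0)}) \tensor c_{(1)}^{(1)}) \cdot (\Delta_H \circ \phi^{\conv -1})(c_{(2)}) \cdot (\phi(c_{(3)}) \tensor 1),
\]
and after using commutativity of $H$ and cocommutativity of $C$ to rearrange, the vanishing condition $\D^1(\phi) = e$ becomes the equivalent statement $\Delta_H(\phi(c)) = \phi(c_{(1)}^{(0)}) \tensor c_{(1)}^{(1)} \phi(c_{(2)})$, i.e.\ the coaction condition \refitem{item:CocharacterCoactionCondition}. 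The comodule condition \refitem{item:CocharacterComoduleCondition} becomes automatic in the cocommutative setting: swapping $c_{(1)}$ and $c_{(2)}$ on its right-hand side by cocommutativity of $\Delta_C$ and then reordering factors by commutativity of $H$ returns the left-hand side. The counit condition is built into $\Gamma_0$. Second, for $\image \D^0 = \injco{\Zentrum(C)}$, the formula for $\D^0$ derived above applied to a delta functional $f = \delta_c$ reproduces verbatim the definition \eqref{eq:CoinjectionCocenterIntoCocharacters} of $\injco{c}$, so the image exhausts precisely the inner cocharacters.

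The main obstacle is the careful manipulation turning the single coboundary equation $\D^1(\phi) = e$ in $H \tensor H$ into the cocharacter coaction condition. The equation contains three occurrences of $\phi$ together with a $\Delta_H \circ \phi^{\conv -1}$ term that has to be moved to the other side by convolution inversion, and the two tensor factors have to be disentangled using commutativity of $H$, cocommutativity of $C$, and the identity from Lemma \ref{lemma:AntipodeAndCoaction} controlling the interplay between the coaction and the antipode. The remaining bookkeeping, including the verification that the resulting bijections are group homomorphisms and that the $S^0$ identification respects the restriction to $\Zentrum(C)$, is routine.
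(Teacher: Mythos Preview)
Your proposal is correct and follows essentially the same route as the paper's proof: unwind $\D^0$ to recover the coinvariance condition, unwind $\D^1$ and use commutativity of $H$ together with cocommutativity of $C$ to reduce $\D^1(\phi)=e$ to the coaction condition \eqref{eq:CocharacterCoactionConditionInMorphisms}, observe that the comodule condition \refitem{item:CocharacterComoduleCondition} is automatic in this setting, and identify $\image \D^0$ with the inner cocharacters $\injco{c}$. The paper's argument is terser and does not explicitly invoke Lemma~\ref{lemma:AntipodeAndCoaction} for the rearrangement---commutativity and cocommutativity alone suffice---so you may find that lemma unnecessary once you write the computation out.
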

\begin{proof}
    The kernel of $\D^0$ is given by the coinvariants as
    $\D^0(f)(c) = (f \tensor \id)\coacts(c) = f(c) \tensor 1$ says
    $f(c^{(0)}) \tensor c^{(1)} = f(c) \tensor 1$ and hence
    $\coacts(c) = c \tensor 1$ follows, which is the definition of the
    coinvariants.

    For the first group, we use commutativity of $H$ and
    cocommutativity of $C$, to see that $\ker \D^1$ is given by
    elements of the form
    \begin{equation*}
        (\id \tensor \mu) \circ (f \tensor \id \tensor f) \circ
        (\coacts \tensor \id) \circ \Delta_C
        = \Delta_H \circ f,
    \end{equation*}
    where we also used the unitarity of $1$. So it coincides with the
    cocharacter group, as the comodule condition in this case is for
    free. The image of $\D^0$ are the inner cocharacters, i.e. those
    given by delta functionals.
\end{proof}

%
%

\section{Equivariant Morita-Takeuchi theory}
\label{sec:EquivariantMoritaTakeuchi}%

In noncommutative algebra it is a classical observation that not the
isomorphisms of algebras give rise to the most interesting category
but it is more convenient to extend the notion of morphism and
consider equivalence of module categories. Morita~\cite{morita:1958a}
showed that one can describe these equivalences via certain bimodules,
now termed Morita bimodules. For coalgebras attempts to consider
analogous notions where first done by Lin~\cite{Lin:1974} and more
fruitfully by Takeuchi~\cite{Takeuchi:1977}. The later approach, now
termed Morita-Takeuchi theory, turned out to be more convenient one
and was extended by Al-Takhman to coalgebras over rings
\cite{AlTakhman:2002a}.

Having a general notion of cosymmetry via the coaction of an Hopf
algebra it is natural to wonder how Morita-Takeuchi equivalence
changes in the presence of cosymmetries. Therefore we consider in this
section the notion of equivariant Morita-Takeuchi equivalence. This
section gives a justification for introducing the cocharacter groups
and the Schism complex, aswell, since these cohomology groups model
obstructions of lifting coactions on coalgebras to the corresponding
comodules. This shows that equivariant Morita-Takeuchi theory is
equivalently well-behaved as equivariant Morita theory, cf.\
\cite{jansen.waldmann:2006a}.

We recall the notion of an equivariant comodule.
\begin{definition}[$H$-equivariant comodule]
     \label{definition:EquivariantComodule}%
     Let $H$ be a Hopf algebra and $C$ be a coalgebra with coaction
     $\coacts$ of $H$. A (right) $C$-comodule $M$ is called
     $H$-equivariant comodule, if $\coacts$ lifts to $M$, i.e.\ there
     is a linear map $\coacts \colon M \to M \tensor H$, which turns
     $M$ into a $H$-comodule and additionally satisfies
     \begin{equation}
         \label{eq:EquivariantComodule}
         m_{(0)}^{(0)} \tensor m_{(1)}^{(0)} \tensor m^{(1)}
         = m_{(0)}^{(0)} \tensor m_{(1)}^{(0)} \tensor m_{(0)}^{(1)}
         m_{(1)}^{(1)}
     \end{equation}
     for all $m \in M$.
\end{definition}
The definition of a left equivariant comodule is analogously.

We need to check that equivariance is compatible with cotensorizing
comodules. For this recall that an $H$-module $M$ is $H$-pure in an
$H$-module $N$ if the natural map $M \tensor X \to N \tensor X$ is
injective for any $H$-module $X$. We remind the reader that the
cotensor product of two comodules is given by the equalizer of the
coactions, i.e. if $M,N$ are right and left $C$-comodules,
respectively, then the cotensor product $M \cotensor_C N$ is given by
\begin{equation}
    \label{eq:CotensorProduct}
        \begin{tikzpicture}[baseline = (current bounding box).center]
            \matrix(m)[matrix of math nodes,
            row sep = 3em,
            column sep = 5em]{
              M \cotensor[C] N & M \tensor[R] N & M \tensor_R C
              \tensor[R] N \\
            };
            \draw[dashed, ->] (m-1-1) to (m-1-2);
            \draw[->] ($(m-1-2.east) + (0,0.15)$) to
            node[above]{$\rho_M \tensor \id$} ($(m-1-3.west) +
            (0,0.15)$);
            \draw[->] ($(m-1-2.east) + (0,-0.15)$) to node[below]{$\id
              \tensor \rho_N$} ($(m-1-3.west) + (0,-0.15)$);
        \end{tikzpicture},
\end{equation}
where we denote by $\rho_M, \rho_N$ the corresponding coaction.
\begin{lemma}
    \label{item:EquivariantCotensorProduct}%
    Let $M,N$ be $H$-equivariant bicomodules over $C$, and let $M
    \cotensor N$ be $H$-pure in $M \tensor N$. Then the cotensor
    product $M \cotensor N$ is a $H$-equivariant comodule via
    \begin{equation}
        \label{eq:EquivariantCotensorProduct}
        (m \cotensor n)^{(0)} \tensor (m \cotensor n)^{(1)}
        = m^{(0)} \cotensor n^{(0)} \tensor m^{(1)} n^{(1)}
    \end{equation}
    for $m \in M, n \in N$.
\end{lemma}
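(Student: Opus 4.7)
The plan is to verify the lemma in three stages: first establish that the proposed formula is well-defined on $M \cotensor N$, then verify the $H$-comodule axioms, and finally check the equivariance condition relating the $H$- and $C$-coactions.

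For the first (and main) stage, I would introduce a candidate coaction $\widetilde{\coacts}$ on the full tensor product $M \tensor N$ by $\widetilde{\coacts}(m \tensor n) = m^{(0)} \tensor n^{(0)} \tensor m^{(1)} n^{(1)}$, which is manifestly well-defined by linearity. The task is then to show that for $\xi \in M \cotensor N$ the element $\widetilde{\coacts}(\xi)$ already lies in $(M \cotensor N) \tensor H$. Here the $H$-purity hypothesis enters crucially: it identifies $(M \cotensor N) \tensor H$ with the equalizer of $\rho_M \tensor \id_N \tensor \id_H$ and $\id_M \tensor \lambda_N \tensor \id_H$ inside $(M \tensor N) \tensor H$, so it suffices to check that these two maps agree on $\widetilde{\coacts}(\xi)$. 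This verification combines four ingredients: the equivariance of $M$ for its right $C$-coaction, the equivariance of $N$ for its left $C$-coaction, the equalizer identity $(\rho_M \tensor \id_N)(\xi) = (\id_M \tensor \lambda_N)(\xi)$, and the coflatness compatibility~\eqref{eq:SweedlerCoactionCoflatnessCompatibleProduct} which ensures that the $H$-coaction on $C$ intertwines with the product of $H$. Shuffling these identities through the triple tensor, both expressions collapse to a common form in $M \tensor C \tensor N \tensor H$.

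For the $H$-comodule axioms, coassociativity is routine bookkeeping: using coassociativity of the $H$-coactions on $M$ and $N$, the iterate $(\widetilde{\coacts} \tensor \id) \circ \widetilde{\coacts}(m \cotensor n)$ rewrites as $m^{(0)} \cotensor n^{(0)} \tensor m^{(1)}{}_{(1)} n^{(1)}{}_{(1)} \tensor m^{(1)}{}_{(2)} n^{(1)}{}_{(2)}$, which equals $(\id \tensor \Delta_H) \circ \widetilde{\coacts}(m \cotensor n)$ because $\Delta_H$ is an algebra homomorphism. Counitality follows analogously from counitality of the $H$-coactions of $M$ and $N$ together with $\counit_H$ being an algebra homomorphism.

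For the equivariance condition, I would equip $M \cotensor N$ with, say, the right $C$-coaction inherited from $N$ (the left-sided case from $M$ being symmetric). A direct unwinding reduces the equivariance equation to the corresponding equivariance axiom for $N$, applied termwise on the $N$-factor while $m^{(0)}$ is carried along untouched. The principal obstacle throughout is the first stage: it is there that the interplay between the cotensor equalizer, the two equivariance axioms, and the coflatness identity must be orchestrated carefully, and where $H$-purity does the essential work of turning an equalizer agreement in the ambient space $M \tensor N \tensor H$ into an honest coaction on $M \cotensor N$.
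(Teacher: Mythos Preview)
Your proposal is correct and follows essentially the same route as the paper: a Sweedler-notation verification of the $H$-comodule coassociativity and of the equivariance identity~\eqref{eq:EquivariantComodule}, with $H$-purity invoked to guarantee that the formula lands in $(M \cotensor N)\tensor H$. The only notable difference is that you spell out the well-definedness step explicitly via the equalizer characterisation of $(M\cotensor N)\tensor H$, whereas the paper simply defers this to \cite[Lemma~2.3]{AlTakhman:2002a}; your ordering (well-definedness, then comodule axioms, then equivariance) also reverses the paper's, but the computations themselves are the same.
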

\begin{proof}
    Note that $H$-purity is needed to make the definition well-defined
    as this gives the associativity up to isomorphism of the
    combination of the cotensor with the tensor product, see
    \cite[Lemma~2.3]{AlTakhman:2002a}. Let $m \in M, n \in N$, then
    \begin{align*}
        (m \cotensor n)_{(0)}^{(0)} \tensor (m \cotensor
        n)_{(1)}^{(0)} \tensor (m \cotensor n)^{(1)}
        &= (m \cotensor n)_{(0)}^{(0)} \tensor n_{(1)}^{(0)} \tensor
           m^{(1)} n^{(1)} \\
        &= m^{(0)} \tensor n_{(0)}^{(0)} \tensor n_{(1)}^{(0)} \tensor
           m^{(1)} n^{(1)} \\
        &= (\id^{(3)} \tensor \mu) \circ \tau^{(2 3)(4 5)} (m^{(0)} \tensor
           n_{(0)}^{(0)} \tensor n_{(1)}^{(0)} \tensor  n^{(1)}
           \tensor m^{(1)}) \\
        &= (\id^{(3)} \tensor \mu) \circ \tau^{(2 3)(4 5)} (m^{(0)} \tensor
           n_{(0)}^{(0)} \tensor n_{(1)}^{(0)} \tensor  n_{(0)}^{(1)}
           n_{(1)}^{(1)} \tensor m^{(1)}) \\
        &= m^{(0)} \tensor n_{(1)}^{(0)} \tensor n_{(0)}^{(0)}  \tensor
           m^{(1)} n_{(0)}^{(1)} n_{(1)}^{(1)} \\
        &= m_{(0)}^{(0)} \tensor m_{(1)}^{(0)} \tensor n^{(0)} \tensor
           m^{(1)} n_{(0)}^{(1)} n_{(1)}^{(1)} \\
        &= (\id^{(3)} \tensor \mu) \circ \tau^{(3 4)}( m_{(0)}^{(0)}
           \tensor m_{(1)}^{(0)} \tensor m^{(1)} \tensor n^{(0)}
           \tensor n_{(0)}^{(1)} n_{(1)}^{(1)}) \\
        &= (\id^{(3)} \tensor \mu) \circ \tau^{(3 4)}( m_{(0)}^{(0)}
           \tensor m_{(1)}^{(0)} \tensor m_{(0)}^{(1)} m_{(1)}^{(1)}
           \tensor n^{(0)} \tensor n_{(0)}^{(1)} n_{(1)}^{(1)}) \\
        &= (m \cotensor n)_{(0)}^{(0)} \tensor (m \cotensor
           n)_{(1)}^{(0)} \tensor m_{(0)}^{(1)} m_{(1)}^{(1)}
           n_{(0)}^{(1)} n_{(1)}^{(1)} \\
        &= (m \cotensor n)_{(0)}^{(0)} \tensor (m \cotensor
           n)_{(1)}^{(0)} \tensor (m \cotensor n)_{(0)}^{(1)} (m
           \cotensor n)_{(1)}^{(1)},
    \end{align*}
    and we have
    \begin{align*}
        (m \cotensor n)^{(0)} \tensor (m \cotensor n)^{(1)} \tensor (m
        \cotensor n)^{(2)}
        &= m^{(0)} \cotensor n^{(0)} \tensor m^{(1)} n^{(1)} \tensor
           m^{(2)} n^{(2)} \\
        &= m^{(0)} \cotensor n^{(0)} \tensor (m^{(1)} n^{(1)})^{(1)}
           \tensor (m^{(1)} n^{(1)})^{(2)} \\
        &= (m \cotensor n)^{(0)} \tensor ((m \cotensor n)^{(1)})^{(1)}
           \tensor ((m \cotensor n)^{(1)})^{(2)}
    \end{align*}
    hence the lemma is proved since $M \cotensor N$ is a bicomodule as
    cotensor product of bicomodules.
\end{proof}

We also need a corresponding notion of morphisms and isomorphisms.
\begin{definition}[$H$-equivariant comodule morphism]
    \label{definition:CoactionEquivariantMorphism}%
    Let $C$ be a coalgebra with $H$-coaction and let $M,N$ be
    equivariant comodules over $C$. A comodule morphism $\phi \colon M
    \to N$ is called $H$-equivariant if
    \begin{equation}
        \label{eq:CoactionEquivariantMorphism}
        \phi(m)^{(0)} \tensor \phi(m)^{(1)}
        = \phi(m^{(0)}) \tensor m^{(1)}
    \end{equation}
    for all $m \in M$.
    Two $H$-equivariant comodules $(M, \coacts), (N, \tilde{\coacts})$
    are isomorphic, if there exists a $\phi \in \Iso(M,N)$ such that
    $\tilde{\coacts} = \coacts_\phi$ with
    \begin{equation}
        \label{eq:IsomorphicEquivariantComodules}
        \coacts_\phi(n) = \phi(\phi^{-1}(n)^{(0)}) \tensor
        \phi^{-1}(n)^{(1)}
    \end{equation}
    for $n \in N$.
\end{definition}
It is obvious that the defined coaction is actually a coaction.

As we are interested in an equivariant Morita-Takeuchi theory, we
define now equivariant Morita-Takeuchi bicomodules. Recall from
\cite{AlTakhman:2002a} that for coalgebras over rings the equivalence
of their comodule categories is given by Morita-Takeuchi bicomodules.
For the definition of a Morita-Takeuchi bicomodule recall that a
$(C,D)$-bicomodule $M$ is called quasi-finite, if the functor
$\cargument \cotensor M \colon \Rcomod[C] \to \Rcomod[D]$ between the
comodule categories of $C$ and $D$ has a left adjoint, which will be
denoted by $\Cohom(M, \cargument)$, see \cite[Theorem
3.6]{AlTakhman:2002b} for the justification of this unfamiliar
definition of quasi-finiteness. If the cohom functor
$\Cohom(M, \cargument)$ is exact, one calls $M$ an injector. The
coendomorphism coalgebra is $\Coend(M) = \Cohom(M, M)$. A right
$C$-comodule is called faithfully coflat if the functor
$M \cotensor \cargument \colon \Rcomod[C] \to \Lmod[R]$ is exact and
faithful.

For two coalgebras $C$ and $D$ a Morita-Takeuchi bicomodule is a
$(C,D)$-bicomodules $M$, which is quasi-finite, faithfully coflat, an
injector in $\Rcomod[D]$ and one has $\Coend(M) \cong C$ naturally as
coalgebras. In the sequel we denote by the subscript $\argument_H$
the corresponding notion in the category of $H$-equivariant
bicomodules.
\begin{definition}[$H$-equivariant Morita-Takeuchi bicomodule]
    \label{definition:EquivariantMoritaTakeuchiBicomodule}%
    Let $C$ and $D$ be coalgebras with $H$-coaction. An equivariant
    $(C,D)$-bicomodule is called equivariant Morita-Takeuchi bicomdule
    if it is quasi-finite, faithfully coflat and an injector in
    $\Rcomod[D]_H$ and $\Coend_H(M) \cong C$ naturally as coalgebras
    via the left comodule structure.
\end{definition}
Next we investigate the notion of a twirled coaction.
\begin{theorem}
    \label{theorem:TwirledComodule}%
    Let $(M, \coacts)$ be a $H$-equivariant comodule and let $\phi \in
    \Hom(C,H)$. The twirled coaction $\coacts_\phi$ defined by
    \begin{equation}
        \label{eq:TwirledCoactionMorphism}
        \coacts_\phi
        = (\id \tensor \mu) \circ (\coacts \tensor \phi) \circ \rho
    \end{equation}
    i.e.
    \begin{equation}
        \label{eq:TwirledCoaction}
        \coacts_\phi(m)
        = m^{\phi(0)} \tensor m^{\phi(1)}
        = m_{(0)}^{(0)} \tensor m_{(0)}^{(1)} \phi(m_{(1)})
    \end{equation}
    for $m \in M$ defines another $H$-equivariant comodule $(M,
    \coacts_\phi)$ if $\phi \in \Gamma(C \Coacts H)$. We get
    ``iff'' if $M$ is an equivariant Morita-Takeuchi bicomodule.
\end{theorem}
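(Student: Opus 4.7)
For the forward direction, assume $\phi \in \Gamma(C \Coacts H)$ and verify the three axioms in Definition~\ref{definition:EquivariantComodule} for $(M, \coacts_\phi)$ in turn. Counitarity of $\coacts_\phi$ is immediate from the counit condition $\counit_H \circ \phi = \counit_C$ on the cocharacter combined with counitarity of the original $\coacts$ and of the $C$-coaction $\rho$. For coassociativity, expand $(\coacts_\phi \tensor \id) \circ \coacts_\phi$ on $m$ and use coassociativity of $\coacts$ together with the coaction condition on $\phi$ to rewrite the appearing $\Delta_H \circ \phi$; the comodule condition on $\phi$ is needed to reorder the intermediate factors so that the expression matches $(\id \tensor \Delta_H) \circ \coacts_\phi$. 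Finally, the compatibility~\eqref{eq:EquivariantComodule} between $\coacts_\phi$ and $\rho$ follows from the corresponding compatibility of the original $\coacts$ with $\rho$, again combined with the comodule condition of $\phi$ applied to $\phi(m_{(1)})$.

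For the converse, assume that $(M, \coacts_\phi)$ is an $H$-equivariant comodule and that $M$ is an equivariant Morita-Takeuchi bicomodule. The key leverage is that faithful coflatness of $M$ over $C$, together with the identification $\Coend_H(M) \cong C$, makes $M$ a faithful probe of coalgebra-level data: any two linear maps into $C \tensor H^{\tensor k}$ whose compositions with $\rho$ agree on $M$ must already coincide. Apply this principle to each of the three axioms of $\coacts_\phi$ to translate them back to statements about $\phi$. Counitarity of $\coacts_\phi$ yields the counit condition $\counit_H \circ \phi = \counit_C$; coassociativity of $\coacts_\phi$ produces the cocharacter coaction condition after matching the $\Delta_H \circ \phi$ terms; and the $\rho$-compatibility of $\coacts_\phi$ delivers the comodule condition on $\phi$. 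For convolution invertibility, the candidate $\phi^{\conv -1}(c) = S(\phi(c^{(0)})) c^{(1)}$ is checked to be a two-sided inverse exactly as in Proposition~\ref{proposition:CocharactersFormGroup}, using the cocharacter identities just derived; this places $\phi$ in $\Gamma(C \Coacts H)$.

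The main obstacle is the reverse direction, in particular the step of descending equations from $M$ to equations on $\phi \in \Hom(C,H)$. Without the Morita-Takeuchi hypothesis there is no reason for an identity satisfied by $\coacts_\phi$ on $M$ to factor uniquely through $C$, so the entire argument rests on exploiting $\Coend_H(M) \cong C$ and the faithful coflatness of $M$ as an $(C,D)$-bicomodule. Once this factorization principle is set up carefully, each of the three cocharacter axioms drops out by matching coefficients on the $M$-factor of both sides of the appropriate coaction identity.
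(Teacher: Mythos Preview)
Your proposal is correct and follows essentially the same route as the paper. In the forward direction both you and the paper verify counitarity, the $H$-comodule coassociativity, and the compatibility~\eqref{eq:EquivariantComodule} by invoking, respectively, the counit condition, the coaction condition~\refitem{item:CocharacterCoactionCondition}, and the comodule condition~\refitem{item:CocharacterComoduleCondition} on $\phi$; in the converse direction both argue that the equalities on $M$ descend to the cocharacter identities on $C$ precisely because $M$ is Morita--Takeuchi. You are more explicit than the paper about the mechanism of descent (faithful coflatness together with $\Coend_H(M) \cong C$) and you close a small gap by addressing convolution invertibility of $\phi$ via the candidate $S(\phi(c^{(0)}))c^{(1)}$, which the paper's proof does not mention.
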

\begin{proof}
    If $\phi \in \Gamma(C \Coacts H)$, we have to check that this
    really defines a coaction, that is we have to show it is a
    $H$-comodule, i.e. it fulfills
    \begin{equation*}
        m^{\phi(0)} \tensor m^{\phi(1)} \tensor m^{\phi(2)}
        = m^{\phi(0)} \tensor (m^{\phi(1)})^{(1)} \tensor
          (m^{\phi(1)})^{(2)}
    \end{equation*}
    and
    \begin{equation*}
        m \tensor 1 = m^{\phi(0)} \tensor \counit(m^{\phi(1)}),
    \end{equation*}
    and is compatible with the coaction on the coalgebra
    \begin{equation*}
        m_{(0)}^{\phi(0)} \tensor m_{(1)}^{\phi(0)} \tensor m^{\phi(1)}
        = m_{(0)}^{\phi(0)} \tensor m_{(1)}^{(0)} \tensor
          m_{(0)}^{\phi(1)} m_{(1)}^{(1)}.
    \end{equation*}

    For the comodule condition we calculate
    \begin{align*}
        m_{(0)}^{\phi(0)} \tensor m_{(1)}^{(0)} \tensor
        m_{(0)}^{\phi(1)} m_{(1)}^{(1)}
        &= m_{(0)}^{(0)} \tensor m_{(2)}^{(0)} \tensor m_{(0)}^{(1)}
           \phi(m_{(1)}) m_{(2)}^{(1)} \\
        &\stackrel{(a)}{=} m_{(0)}^{(0)} \tensor m_{(1)}^{(0)} \tensor
           m_{(0)}^{(1)} m_{(1)}^{(1)} \phi(m_{(2)}) \\
        &\stackrel{(b)}{=} m_{(0)}^{(0)} \tensor m_{(1)}^{(0)} \tensor
           m_{(0)}^{(1)} \phi(m_{(2)}) \\
        &= m_{(0)}^{\phi(0)} \tensor m_{(1)}^{\phi(0)} \tensor m^{\phi(1)},
    \end{align*}
    where we used in ($a$) the comodule condition of $\phi$, in ($b$)
    we used that $\coacts$ is compatible coaction on the comodule $M$,
    where we have to keep in mind that we used the $C$-comodule
    coaction two times which we do not see in the Sweedler notation.

    The counitarity of $\coacts_\phi$ is clear as everything involved
    is counitary. For the $H$-coaction condition we calculate
    \begin{align*}
        m^{\phi(0)} \tensor (m^{\phi(1)})^{(1)} \tensor
        (m^{\phi(1)})^{(2)}
        &= m_{(0)}^{(0)} \tensor m_{(0)}^{(1)} \phi(m_{(1)})^{(1)}
           \tensor m_{(0)}^{(2)} \phi(m_{(1)})^{(2)} \\
        &\stackrel{(a)}{=} m_{(0)}^{(0)} \tensor m_{(0)}^{(1)}
           \phi(m_{(1)}^{(0)}) \tensor m_{(0)}^{(2)} m_{(1)}^{(1)}
           \phi(m_{(2)}) \\
        &= m_{(0)}^{(0)} \tensor m_{(0)}^{(1)} \phi(m_{(1)}^{(0)})
           \tensor m_{(0)}^{(2)} m_{(0)}^{(1)} \counit(m_{(1)}^{(1)})
           \phi(m_{(2)}) \\
        &\stackrel{(b)}{=} m_{(0)}^{(0)} \tensor m_{(0)}^{(1)}
           \phi(m_{(1)}^{(0)}) \tensor m_{(0)}^{(1)}
           \counit(m_{(1)}^{(1)}) \phi(m_{(2)}) \\
        &= m_{(0)}^{(0)} \tensor m_{(0)}^{(1)} \phi(m_{(1)}^{(0)})
           \tensor \counit(m_{(0)}^{(1)}) m_{(0)}^{(2)}
           \counit(m_{(1)}^{(1)}) \phi(m_{(2)}) \\
        &\stackrel{(c)}{=} m_{(0)}^{(0)} \tensor m_{(0)}^{(1)}
           \phi(m_{(1)}^{(0)}) \tensor m_{(0)}^{(2)}
           \phi(\counit(m_{(1)}) m_{(2)}) \\
        &= m_{(0)}^{(0)} \tensor m_{(0)}^{(1)} \phi(m_{(1)}^{(0)})
           \tensor m_{(0)}^{(2)} \phi(m_{(1)}) \\
        &= m^{\phi(0)} \tensor m^{\phi(1)} \tensor m^{\phi(2)},
    \end{align*}
    where in ($a$) we used the coaction condition for $\phi$, in ($b$)
    we used that $\coacts$ is a compatible comodule coaction, in ($c$)
    the counitarity of $\phi$, and of course many times the properties
    of the counit.

    Now let $\coacts_\phi$ be a compatible coaction on $M$.
    Counitarity is again clear. We calculate
    \begin{align*}
        m_{(0)}^{(0)} \tensor m_{(0)}^{(1)} \phi(m_{(1)}^{(0)})
        \tensor m_{(0)}^{(2)} m_{(1)}^{(1)} \phi(m_{(2)})
        &= m_{(0)}^{(0)} \tensor m_{(0)}^{(1)} \phi(m_{(1)}^{(0)})
           \tensor m_{(0)}^{(1)} \phi(m_{(1)}) \\
        &= m^{\phi(0)} \tensor m^{\phi(1)} \tensor m^{\phi(2)} \\
        &= m^{\phi(0)} \tensor (m^{\phi(1)})^{(1)} \tensor
           (m^{\phi(1)})^{(2)} \\
        &= m_{(0)}^{(0)} \tensor m_{(0)}^{(1)} \phi(m_{(1)})^{(1)}
           \tensor m_{(0)}^{(2)} \phi(m_{(1)})^{(2)}
    \end{align*}
    and by $M$ being a Morita-Takeuchi bicomodule the coaction
    condition for $\phi$ follows.

    For the comodule condition we have
    \begin{align*}
        m_{(0)}^{(0)} \tensor m_{(1)}^{(0)} \tensor m_{(0)}^{(1)}
        m_{(1)}^{(1)} \phi(m_{(2)})
        &= m_{(0)}^{(0)} \tensor m_{(1)}^{(0)} \tensor m_{(0)}^{(1)}
           \phi(m_{(2)}) \\
        &= m_{(0)}^{\phi(0)} \tensor m_{(1)}^{\phi(0)} \tensor
           m^{\phi(1)} \\
        &= m_{(0)}^{\phi(0)} \tensor m_{(1)}^{(0)} \tensor
           m_{(0)}^{\phi(1)} m_{(1)}^{(1)} \\
        &= m_{(0)}^{(0)} \tensor m_{(2)}^{(0)} \tensor m_{(0)}^{(1)}
           \phi(m_{(1)}) m_{(2)}^{(1)}
    \end{align*}
    were the comodule condition for $\phi$ again follows by $M$ being a
    Morita-Takeuchi bicomodule.
\end{proof}
We are interested in those cocharacters which give isomorphic
equivariant Morita-Takeuchi bicomodules.
\begin{lemma}
    \label{lemma:CoinvariantsYieldIsomorphicEquivComodules}%
    Let $\phi \in \Gamma(C \Coacts H)$. Then $(M, \coacts)$ and $(M,
    \coacts_\phi)$ are isomorphic iff $\phi = \injco{c}$ for $c \in
    \Zentrum(C)$, where $\injco{\argument} \colon \Zentrum(C)
    \to \Gamma(C \Coacts H)$ is again given by
    \begin{equation}
        \label{eq:CoinvariantsYieldIsomorphicEquivComodules}
        \injco{c} = ((\delta_c \tensor \id) \circ \coacts) \conv
        (\delta_c \tensor 1)^{\conv -1}.
    \end{equation}
    Also $\coacts_{\injco{c}} = \coacts$ iff $c \in \Zentrum(C)^{\co
      H}$.
\end{lemma}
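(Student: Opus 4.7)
The proof plan naturally breaks into three pieces: the two directions of the first equivalence and the characterization of when twirling by an inner cocharacter is trivial. Throughout, I will exploit the $\Coend_H(M) \cong C$ identification given by $M$ being an equivariant Morita-Takeuchi bicomodule, which under duality identifies (bi)comodule endomorphisms of $M$ with cocentral elements of $C$.

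For the ``if'' direction, given $c \in \Zentrum(C)$, I would define the candidate isomorphism
\[
    \psi_c \colon M \to M, \qquad \psi_c(m) = \delta_c(m_{(-1)})\, m_{(0)},
\]
where $m \mapsto m_{(-1)} \tensor m_{(0)}$ denotes the left $C$-coaction on $M$. Cocentrality of $c$ makes $\psi_c$ a right $C$-comodule endomorphism (the analogous check on the left is automatic), and convolution-invertibility of $\delta_c$ in $\Zentrum(C)^*$ makes $\psi_c$ invertible with $\psi_c^{-1}(m) = \delta_c^{\conv -1}(m_{(-1)}) m_{(0)}$. I would then unwind both sides of $\coacts_{\psi_c} = \coacts_{\injco{c}}$ using the definitions: on the left one has $\psi_c(\psi_c^{-1}(m)^{(0)}) \tensor \psi_c^{-1}(m)^{(1)}$, and after applying the definition of $\psi_c$, the defining formula~\eqref{eq:CoinvariantsYieldIsomorphicEquivComodules} appears as the factor in $H$. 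This reduces the identity to the very definition of the twirled coaction~\eqref{eq:TwirledCoaction}.

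For the ``only if'' direction, assume an isomorphism $\psi \colon (M,\coacts) \to (M, \coacts_\phi)$ of equivariant comodules in the sense of Definition~\ref{definition:CoactionEquivariantMorphism}. The map $\psi$ is in particular a $(C,D)$-bicomodule automorphism of $M$. Because $M$ is Morita-Takeuchi with $\Coend_H(M) \cong C$, bicomodule endomorphisms of $M$ are parametrized by $\Zentrum(C)^*$ via the formula $m \mapsto \alpha(m_{(-1)}) m_{(0)}$, and invertible ones correspond to convolution-invertible $\alpha$. Write $\psi = \psi_c$ for the resulting $c \in \Zentrum(C)$. Plugging $\psi_c$ into the isomorphism relation $\coacts_\phi = \coacts_{\psi_c}$, cancelling via Morita-Takeuchi faithful coflatness (this is exactly the step that provides uniqueness of the $H$-factor, analogous to the ``iff'' statement of Theorem~\ref{theorem:TwirledComodule}), I would read off the equation $\phi = \injco{c}$ by comparison with~\eqref{eq:CoinvariantsYieldIsomorphicEquivComodules}.

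Finally, for the second equivalence, I would apply the twirled-coaction formula~\eqref{eq:TwirledCoaction} and again use that $M$ is a Morita-Takeuchi bicomodule so that $\coacts_{\injco{c}} = \coacts$ forces $\injco{c} = \counit_C \tensor 1_H$ (the convolution identity in $\Hom(C,H)$). Unpacking~\eqref{eq:CoinvariantsYieldIsomorphicEquivComodules}, this equality is equivalent to $(\delta_c \tensor \id) \circ \coacts = \delta_c \tensor 1$, which in turn says precisely $\coacts(c) = c \tensor 1$, i.e.\ $c \in \Zentrum(C)^{\co H}$; the converse direction is immediate by counitarity. The main technical obstacle is the uniqueness step in the forward direction, where extracting $\phi$ from a bicomodule automorphism crucially uses the Morita-Takeuchi assumption in the form of faithful coflatness and $\Coend_H(M) \cong C$; everything else is a matter of carefully manipulating extended Sweedler indices as in the proof of Theorem~\ref{theorem:TwirledComodule}.
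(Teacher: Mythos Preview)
Your proposal is correct and follows essentially the same strategy as the paper: compute the twirled coaction $\coacts_{\injco{c}}$ explicitly, recognize it as having the form $\coacts_{\psi}$ for a bicomodule automorphism $\psi_c$ built from $\delta_c$, and invoke the parametrization of bicomodule automorphisms of a Morita--Takeuchi bicomodule by cocentral elements to handle the converse. The paper carries this out in a single short computation ending with ``every comodule morphism is by linearity of such a form for $c \in \Zentrum(C)$'', while you separate the two directions and make the role of $\Coend_H(M)\cong C$ and faithful coflatness explicit; the underlying argument is the same.
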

\begin{proof}
    In this situation we have for $\coacts_{\injco{c}}$ the following
    \begin{align*}
        m^{\injco{c}(0)} \tensor m^{\injco{c}(1)}
        &= m_{(0)}^{(0)} \tensor m_{(0)}^{(1)} \delta_c(m_{(1)}^{(0)})
           \delta_c^{\conv -1}(m_{(2)}) m_{(1)}^{(1)} \\
        &= m_{(0)}^{(0)} \tensor m^{(1)} \psi_c(m_{(1)}^{(0)}) \\
        &= \psi_c(m_{(1)}^{(0)}) m_{(0)}^{(0)} \tensor m^{(1)}
    \end{align*}
    with $\psi_c(m_{(1)}^{(0)}) = \delta_c(m_{(1)}^{(0)})
    \delta_c^{\conv -1}(m_{(2)}^{(0)}) \in R$ using counitarity. Now
    this is of the desired form, as every comodule morphism is by
    linearity of such a form for $c \in \Zentrum(C)$. Also one has
    $\psi(m_{(1)}^{(0)}) = 1$ iff $c^{(0)} \tensor c^{(1)} = c \tensor
    1$, which is the definition of the coinvariants.
\end{proof}
Hence we have the justification of the term homoschism, as every
element of the first homoschism group yields a new equivariant
comodule. Furthermore all of them arise by such elements.
\begin{proposition}
    \label{proposition:EveryCoactionOnComoduleArisesViaCohomoschism}%
    Let $(M, \coacts)$ be a $H$-equivariant Morita-Takeuchi
    bicomodule. Then the group $S^1(H,C)$ acts free and transitive on
    the set of all $H$-coactions, which split $M$ into a
    $H$-equivariant comodule. The group action is given by
    \begin{equation}
        \label{eq:EveryCoactionOnComoduleArisesViaCohomoschism}
        (\phi, \coacts) \mapsto \coacts_\phi
    \end{equation}
    for $\phi \in S^1(H,C)$.
\end{proposition}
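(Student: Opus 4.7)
The plan is to proceed in three stages: first establish that $\Gamma(C \Coacts H)$ acts on the set of $H$-equivariant coactions on $M$ by twirling, then descend to a free action of $S^1(H,C)$ on isomorphism classes of lifts, and finally prove transitivity. The ingredients are essentially all present: Theorem~\ref{theorem:TwirledComodule} supplies well-definedness in both directions, and Lemma~\ref{lemma:CoinvariantsYieldIsomorphicEquivComodules} supplies the stabilizer, leaving only existence of a cocharacter for each pair of lifts as a genuinely new step.

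First I would verify that $(\phi, \coacts) \mapsto \coacts_\phi$ is a group action on the set of $H$-equivariant coactions on $M$ that lift the fixed $C$-bicomodule structure. The easy direction of Theorem~\ref{theorem:TwirledComodule} gives that $\coacts_\phi$ is again such a coaction, while the action axioms reduce to $\coacts_{\counit_C} = \coacts$, which is immediate, and $\coacts_{\phi \conv \psi} = (\coacts_\phi)_\psi$, which is a short Sweedler computation using coassociativity of the $C$-coaction on $M$ together with the definition of the convolution, rearranging $(\coacts_\phi)_\psi(m)$ into $m_{(0)}^{(0)} \tensor m_{(0)}^{(1)} (\phi \conv \psi)(m_{(1)})$. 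For the descent, Lemma~\ref{lemma:CoinvariantsYieldIsomorphicEquivComodules} already tells us that $(M,\coacts)$ and $(M,\coacts_\phi)$ are isomorphic as $H$-equivariant comodules precisely when $\phi \in \injco{\Zentrum(C)}$; consequently $\coacts_\phi \cong \coacts_\psi$ iff $\phi \conv \psi^{\conv -1} \in \injco{\Zentrum(C)}$, i.e.\ iff $[\phi] = [\psi]$ in $S^1(H,C)$, which simultaneously yields well-definedness on the quotient and freeness of the descended action.

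The main obstacle is transitivity: given any other $H$-equivariant coaction $\tilde\coacts$ on $M$ lifting the given structure on $C$, one must produce $\phi \in \Gamma(C \Coacts H)$ with $\tilde\coacts = \coacts_\phi$. This is where the Morita-Takeuchi hypotheses are used in an essential way. The strategy is to transport the ``difference'' between $\tilde\coacts$ and $\coacts$---both $H$-lifts of the same underlying $C$-bicomodule structure on $M$---to a morphism $\phi \colon C \to H$ through the natural identification $\Coend_H(M) \cong C$; quasi-finiteness, faithful coflatness and the injector property are what make such a reconstruction possible, effectively realising the convolution of $\tilde\coacts$ with an appropriate convolution inverse of $\coacts$ as a cocharacter on $C$. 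Once $\phi$ is in hand and shown to be counitary and convolution invertible, the ``iff'' direction of Theorem~\ref{theorem:TwirledComodule} applied to the valid equivariant coaction $\coacts_\phi = \tilde\coacts$ forces $\phi$ to satisfy the cocharacter axioms automatically, so no separate verification of the coaction or comodule conditions is required. I expect the explicit construction of $\phi$ from the Morita-Takeuchi data, and in particular the proof of its convolution invertibility, to be the principal technical hurdle.
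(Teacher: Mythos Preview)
Your group-action computation $\coacts_{\phi \conv \psi} = (\coacts_\phi)_\psi$ is exactly what the paper does, and in fact that is \emph{all} the paper does: its proof reads ``We only have to show that this is really a group action'' followed by precisely this Sweedler manipulation, and then concludes. So on the one point the paper actually argues, you match it.

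Where you diverge is in taking the ``free and transitive'' claim seriously. You correctly observe that freeness (and well-definedness of the $S^1$-action on isomorphism classes rather than bare coactions) is supplied by Lemma~\ref{lemma:CoinvariantsYieldIsomorphicEquivComodules}, and that transitivity --- showing that \emph{every} equivariant lift $\tilde\coacts$ arises as some $\coacts_\phi$ --- is a genuinely separate step requiring the Morita--Takeuchi hypothesis through $\Coend_H(M)\cong C$. The paper does not supply this argument; Theorem~\ref{theorem:TwirledComodule} only says that $\coacts_\phi$ is equivariant iff $\phi$ is a cocharacter, not that every equivariant coaction is a twirl of the given one. Your sketch of how to recover $\phi$ from the pair $(\coacts,\tilde\coacts)$ via the coendomorphism identification is the natural route and is more than the paper offers. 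In short: your proposal is more complete than the paper's own proof, and the ``principal technical hurdle'' you flag is real and is simply not addressed there.
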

\begin{proof}
    We only have to show, that this is really a group action,
    i.e. $\coacts_{\phi \conv \psi} = (\coacts_\phi)_\psi$. This is
    seen by the following calculation
    \begin{align*}
        m^{\phi \conv \psi(0)} \tensor m^{\phi \conv \psi(1)}
        &= m_{(0)}^{(0)} \tensor m_{(0)}^{(1)} (\phi \conv
           \psi)(m_{(1)}) \\
        &= m_{(0)}^{(0)} \tensor m_{(0)}^{(1)} \phi(m_{(1)})
           \psi(m_{(2)}) \\
        &= m_{(0)}^{\phi(0)} \tensor m_{(0)}^{\phi(1)} \psi(m_{(1)})
           \\
        &= m^{\psi(\phi(0))} \tensor m^{\psi(\phi(1))}.
    \end{align*}
    Hence the proposition follows.
\end{proof}
\begin{remark}
    \label{remark:ArgumentsWorkForLeftTwirledCoaction}%
    Note that all arguments work equivalently if one considers the
    left twirled coaction $\coacts^\phi$ given by
    \begin{equation}
        \label{eq:ArgumentsWorkForLeftTwirledCoaction}
        \coacts^\phi(m)
        = m_{(0)} \tensor \phi(m_{(1)}^{(0)}) m_{(1)}^{(1)}
    \end{equation}
    Hence the classification for left twirled coactions is the same as
    for right twirled coactions.
\end{remark}

We define the equivariant Picard groupoid as equivalence classes
of equivariant Morita bimodules and denote it by
\begin{equation}
    \label{eq:EquivariantCoPicardGroupoid}
    \Pic_H = \{ [M] \; | \; M \text{ is } H \text{-equivariant
      Morita-Takeuchi bicomodule} \}.
\end{equation}
We get an exact sequence connecting $\Pic_H$ and $\Aut_H$,
cf. Definition~\ref{definition:CoactionEquivariantMorphisms}:
\begin{equation}
    \label{eq:EquivariantCoPicardExactSequence}
    \begin{tikzpicture}[baseline = (current bounding box).center]
        \matrix(m)[matrix of math nodes,
        row sep = 3em,
        column sep = 5em]{
          1 & \InnAut_H(C) & \Aut_H(C) & \Pic_H(C) \\
        };
        \draw[->] (m-1-1) to (m-1-2);
        \draw[->] (m-1-2) to (m-1-3);
        \draw[->] (m-1-3) to node[above]{$\omega_H$} (m-1-4);
    \end{tikzpicture}
\end{equation}

We can describe the connection between $\Pic_H \to \Pic$ with the
group $S^1(H,C)$:
\begin{proposition}
    \label{proposition:MoritaTakeuchiInvarianceHomoschism}%
    Let $C,D$ be two Morita-Takeuchi equivalent coalgebras. Then
    either
    \begin{equation}
        \label{eq:MoritaTakeuchiInvarianceHomoschism}
        S^1(H,C) \cong S^1(H,D)
    \end{equation}
    or
    \begin{equation}
        \label{eq:EquivPicardGroupEmpty}
        \Pic_H(C,D) = \emptyset.
    \end{equation}
\end{proposition}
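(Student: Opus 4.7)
The plan is to use any $H$-equivariant Morita--Takeuchi bicomodule from $\Pic_H(C,D)$ to transport twirling data between the two sides, thereby constructing the isomorphism. Assume $\Pic_H(C,D) \neq \emptyset$ and fix a representative $(M, \coacts) \in \Pic_H(C,D)$. Consider the set $\mathcal{L}(M)$ of all $H$-coactions that promote the $(C,D)$-bicomodule $M$ to an $H$-equivariant bicomodule. By Proposition~\ref{proposition:EveryCoactionOnComoduleArisesViaCohomoschism} the group $S^1(H,D)$ acts freely and transitively on $\mathcal{L}(M)$ via the right twirl $\coacts \mapsto \coacts_\phi$. By the left-sided variant explained in Remark~\ref{remark:ArgumentsWorkForLeftTwirledCoaction}, applied now to the left $C$-comodule structure, the group $S^1(H,C)$ acts freely and transitively on $\mathcal{L}(M)$ via the left twirl $\coacts \mapsto \coacts^\psi$. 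Since both actions are simply transitive on the same set, there is a unique map $\Phi_M \colon S^1(H,D) \to S^1(H,C)$ characterised by the identity $\coacts_\phi = \coacts^{\Phi_M(\phi)}$.

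The central step is to upgrade $\Phi_M$ to a group homomorphism. For this I would prove the key lemma that left and right twirls commute, i.e. $(\coacts^\psi)_\phi = (\coacts_\phi)^\psi$ for $\psi \in \Gamma(C \Coacts H)$ and $\phi \in \Gamma(D \Coacts H)$. This is a direct Sweedler computation resting on two ingredients: the bicomodule axiom that the left $C$-coaction and the right $D$-coaction on $M$ commute, and the fact that $\psi$ involves only the $C$-side while $\phi$ involves only the $D$-side so their evaluations do not interfere. Combining this commutation with $(\coacts_\phi)_\psi = \coacts_{\phi \conv \psi}$ (proved in Proposition~\ref{proposition:EveryCoactionOnComoduleArisesViaCohomoschism}) and the left analogue gives the chain
\[
   \coacts^{\Phi_M(\phi \conv \psi)} \;=\; \coacts_{\phi \conv \psi} \;=\; (\coacts_\phi)_\psi \;=\; (\coacts^{\Phi_M(\phi)})_\psi \;=\; \coacts^{\Phi_M(\phi) \conv \Phi_M(\psi)},
\]
and by freeness of the $S^1(H,C)$-action one concludes $\Phi_M(\phi \conv \psi) = \Phi_M(\phi) \conv \Phi_M(\psi)$.

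For bijectivity, let $M^{-1} \in \Pic_H(D,C)$ be a Morita--Takeuchi quasi-inverse of $M$; it inherits an $H$-coaction from $\Cohom(M, \cargument)$ and the canonical isomorphisms $M \cotensor[D] M^{-1} \cong C$ and $M^{-1} \cotensor[C] M \cong D$ are equivariant by Lemma~\ref{item:EquivariantCotensorProduct}. The completely analogous construction applied to $M^{-1}$ produces $\Phi_{M^{-1}} \colon S^1(H,C) \to S^1(H,D)$; tracing twirls through the equivariant cotensor isomorphisms shows $\Phi_M$ and $\Phi_{M^{-1}}$ are mutually inverse homomorphisms. The main obstacle I anticipate is verifying the left--right commutation of twirls cleanly in Sweedler notation, together with checking that the construction is independent of the chosen representative of $[M] \in \Pic_H(C,D)$ modulo inner cocharacters; once the commutation identity is established, the rest is forced by the free transitivity already available from Proposition~\ref{proposition:EveryCoactionOnComoduleArisesViaCohomoschism}.
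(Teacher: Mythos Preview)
Your proposal is correct and follows the same strategy as the paper: two commuting simply transitive actions on the same set force the acting groups to be isomorphic. The paper's own proof is a single sentence to this effect (``both act freely and transitively and commute, hence coincide''); you have unpacked that sentence into an explicit construction of the isomorphism $\Phi_M$ and a verification that it is a homomorphism, which is exactly the content hidden behind the paper's phrase ``hence must coincide''. Your choice to work with the set $\mathcal{L}(M)$ of coactions on a fixed bicomodule rather than with $\Pic_H(C,D)$ itself is in fact closer to what Proposition~\ref{proposition:EveryCoactionOnComoduleArisesViaCohomoschism} literally proves, so it is a cleaner formulation of the same idea.
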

\begin{proof}
    If $\Pic_H(C,D) \not= \emptyset$ then $S^1(H,C)$ and $S^1(H,D)$
    are both acting free and transitive on $\Pic_H(C,D)$ and are
    commuting and hence must coincide.
\end{proof}
\begin{example}
    \label{example:EquivariantMoritaTakeuchiEquivComatrixCoalgebra}%
    Consider a coalgebra $C$ with coaction of a Hopf algebra $H$. Then
    we can define the coactions on $\Mat_n^*(C)$, the matrix
    coalgebra, and $C^n$ pointwise and get an equivariant
    Morita-Takeuchi equivalence between $\Mat_n^*(C)$ and $C$.
\end{example}
\begin{remark}
    \label{remark:DualityBetweenEquivCoalgebraAndAlgebras}%
    Having in mind that every coalgebra dualizes to an algebra, one
    can study the morphism $\Pic(C) \to \Pic(C^*)$ and try
    to describe, which parts of the $\Pic(C^*)$ are missing. Some results
    in this direction can be found in \cite[Thm.
    2.9]{Cuadra.Rozas.Torrecillas:2000}. In the equivariant
    setting one would like to consider something like the
    morphism $\Pic_H(C) \to \Pic_{H^*}(C^*)$, where now one has to be
    aware that $H^*$ might not be a well-defined Hopf algebra. Some
    steps in this direction can be found in \cite[Chap.
    9]{Montgomery:1993}, where the restricted dual for Hopf algebras
    is investigated. Using these restricted duals one could also
    wonder, what one can see by the morphism
    $\Pic_{H^\circ}(\algebra{A}^\circ) \to \Pic_H(\algebra{A})$. We
    postpone these questions to later projects.
\end{remark}


\section{Acknowledgments}
\label{sec:Acks}%

The author is grateful to Stefan Waldmann for advice and kind
supervision during obtaining the results presented in this article for
the author's master thesis.

%
%

{
  \footnotesize
  \renewcommand{\arraystretch}{0.5}
  \bibliographystyle{chairx}
  \bibliography{hopf}
}

%
%

\ifdraft{\clearpage}
\ifdraft{\phantomsection}
\ifdraft{\addcontentsline{toc}{section}{List of Corrections}}
\ifdraft{\listoffixmes}

%
%

\end{document}
